\documentclass[11pt, reqno]{amsart}
\usepackage{latexsym, amsmath, amssymb, amsthm, mathtools}
\usepackage{cases, verbatim}
\usepackage[active]{srcltx}
\usepackage{esint}
\usepackage[colorlinks]{hyperref}
\usepackage{hypernat}
\usepackage{xcolor}
\usepackage{float}
\usepackage[normalem]{ulem}
\usepackage{cancel}
\usepackage{subcaption}

\usepackage[T1]{fontenc}
\usepackage{mathscinet}

\hypersetup{
	unicode=false,          
	pdftoolbar=true,        
	pdfmenubar=true,        
	pdffitwindow=false,     
	colorlinks=false,       
	linkcolor=red,          
	linkbordercolor=red,
	citecolor=green,        
	citebordercolor=green,
	filecolor=magenta,      
	urlcolor=cyan,           
	urlbordercolor={1 1 1}  
}

\usepackage[margin=1.5in]{geometry}

\newtheorem{thm}{Theorem}

\theoremstyle{remark}
\newtheorem{rmk}[thm]{Remark}
\newtheorem{example}[thm]{Example}

\theoremstyle{definition}
\newtheorem{defi}[thm]{Definition}

\numberwithin{thm}{section} 
\numberwithin{equation}{section}

\makeatletter

\newcommand{\Rmnum}[1]{\expandafter\@slowromancap\romannumeral #1@}
\makeatother

\def\Om{\Omega}

\def\R{{\mathbb R}}

\def\F{{\mathcal F}}

\def\N{{\mathbb N}}

\def\S{{\mathcal S}}

\def\O{{\mathcal O}}

\newcommand{\Oba}{\overline{\Omega}}

\newcommand{\vep}{\varepsilon}
\newcommand{\ol}{\overline}

\newcommand{\dive}{\operatorname{div}}
\newcommand{\tr}{\operatorname{tr}}

\newcommand{\bpm}{\begin{pmatrix}}
	\newcommand{\epm}{\end{pmatrix}}

\newcommand{\la}{\langle}
\newcommand{\ra}{\rangle}

\newcommand{\beq}{\begin{equation}}
	\newcommand{\eeq}{\end{equation}}

		plus 6pt minus 12pt
\newcommand{\kom}[1]{}
\renewcommand{\kom}[1]{{\bf [#1]}}
\renewcommand{\div}{\operatorname{div}}
\newcommand{\abs}[1]{\left|#1\right|}
\newcommand{\norm}[1]{\left\|#1\right\|}
\newcommand{\eps}{\varepsilon}
\newcommand{\Rn}{\mathbb{R}^n}

\def\red#1{\textcolor{red}{#1}}

\newcommand{\numberthis}{\addtocounter{equation}{1}\tag{\theequation}}
\newcommand{\ud}{\, d}

\def\vint_#1{\mathchoice%
	{\mathop{\kern 0.2em\vrule width 0.6em height 0.69678ex depth -0.58065ex
			\kern -0.8em \intop}\nolimits_{\kern -0.4em#1}}%
	{\mathop{\kern 0.1em\vrule width 0.5em height 0.69678ex depth -0.60387ex
			\kern -0.6em \intop}\nolimits_{#1}}%
	{\mathop{\kern 0.1em\vrule width 0.5em height 0.69678ex
			depth -0.60387ex
			\kern -0.6em \intop}\nolimits_{#1}}%
	{\mathop{\kern 0.1em\vrule width 0.5em height 0.69678ex depth -0.60387ex
			\kern -0.6em \intop}\nolimits_{#1}}}

\title[Quantitative stability]{Quantitative stability for quasilinear parabolic equations}

\author[T. Kurkinen]{Tapio Kurkinen}
\address[Tapio Kurkinen]{Geometric Partial Differential Equations Unit, Okinawa Institute of Science and Technology Graduate University, Okinawa 904-0495, Japan, {\tt tapio.kurkinen@oist.jp}}

\author[Q. Liu]{Qing Liu}
\address[Qing Liu]{Geometric Partial Differential Equations Unit, Okinawa Institute of Science and Technology Graduate University, Okinawa 904-0495, Japan, {\tt qing.liu@oist.jp}}

\date{\today}

\begin{document}

	\begin{abstract}
		We examine the stability of a class of quasilinear parabolic partial differential equations under perturbations. We are interested in the behavior of viscosity solutions as the perturbation parameter vanishes and establish explicit convergence rates by adapting standard comparison arguments. Despite the possible singular or degenerate nature of the parabolic operator, our framework covers, in particular, both the normalized and the variational $p$-parabolic equations, providing quantitative estimates for perturbations of the exponent $p$ and limits arising from regularized approximations. 
	\end{abstract}
	
	\subjclass[2020]{35K92, 35B35, 35D40}
	\keywords{quasilinear parabolic equations, quantitative stability, convergence rate, $p$-Laplace equations, viscosity solutions}

	\maketitle

	\section{Introduction}
	\subsection{Motivation}
	In this paper we study the stability for viscosity solutions of a class of quasilinear parabolic partial differential equations, which take the form 
	\begin{equation}\label{general parabolic lim eq}
		\partial_t u-\tr(A(\nabla u)\nabla^2 u)+H(x, t, \nabla u)=0 \quad \text{in $\Omega\times (0, T)$,}
	\end{equation}
	where $\Omega$ is a bounded domain in $\R^n$, $T>0$ is fixed, and 
	\[
	A: \R^n\setminus \{0\}\to \S^n_+, \quad H: \Omega\times (0, T)\times \R^n\to \R,
	\]
	are given functions satisfying appropriate assumptions to be introduced later. Here, $\S^n_+$ represents the set of all nonnegative symmetric $n\times n$ matrices. A key feature of this general setting is that the elliptic operator may exhibit singularities at vanishing gradients. 
	
	One typical example of applicable equations we have in mind is the following parabolic normalized $p$-Laplace type equation (with $1<p<\infty$)
	\begin{equation}\label{np-parabolic0}
		\partial_t u-\Delta^N_p u=0 \quad \text{in $\Om_T:=\Omega\times (0, T)$,}
	\end{equation}  
	where
	\[
	\Delta^N_p u:= |\nabla u|^{2-p}\dive(|\nabla u|^{p-2}\nabla u).
	\]
	Another example is the parabolic equation with the variational $p$-Laplacian:
	\begin{equation}\label{vp-parabolic0}
		\partial_t u-\Delta_p u=0 \quad \text{in $\Om_T$,}
	\end{equation}
	where 
	\[
	\Delta_p u:= \dive(|\nabla u|^{p-2}\nabla u). 
	\]
	We consider the associated Cauchy-Dirichlet boundary value problem for these parabolic equations. The parabolic boundary condition is given by 
	\begin{equation}\label{parabolic bdry-cond}
		u=g \quad \text{on $\partial_p \Om_T$}
	\end{equation}
	for
	\[
	\partial_p \Om_T=(\Omega\times \{0\})\cup (\partial \Omega\times [0, T)),
	\]
	where $g: \partial \Omega\to \R$ is assumed to be continuous.
	
	One of the main questions addressed in this paper concerns the stability of $p$-Laplace type equations under perturbations of the parameter $p$. For a fixed parabolic boundary value $g$, it is known that, as $p\to q$ for some $q\in(1,\infty)$, the unique solution $u_p$ of the Cauchy-Dirichlet problem \eqref{np-parabolic0} or \eqref{vp-parabolic0} converges uniformly to the solution $u_q$ of the corresponding problem with parameter $q$. In the framework of viscosity solutions, consult \cite[Theorem~6.1]{OS} and \cite[Theorem~2.2.1]{Gbook} for such qualitative stability results on the variational and normalized $p$-Laplace equations of parabolic type. See also \cite{KP, LP} for related analysis with different approaches. However, these works do not provide an explicit rate of convergence. Our goal is therefore to further quantify the uniform convergence 
	and to discuss the optimality of the resulting estimates.
	
	On the other hand, when studying the solutions $u$ of singular parabolic equations such as \eqref{np-parabolic0} and \eqref{vp-parabolic0}, it is common to approximate the solutions of regularized equations of the form 
	\begin{equation}\label{regularized eq}
		\partial_t u-\left(|\nabla u|^2+\vep^2\right)^{\frac{p'-p}{2}}\div\left(\left(|\nabla u|^2+\vep^2\right)^{\frac{p-2}{2}}\nabla u\right)=0 \quad \text{in $\Omega_T$,}
	\end{equation}
	where $\vep>0$ is small. Here, the choices $p'=2$ and $p'=p$ yield regularized approximations of \eqref{np-parabolic0} and \eqref{vp-parabolic0}, respectively. As $\vep\to 0$,  the comparison principle for the limiting equation  yields the uniform convergence of the solution $u_\vep$ of \eqref{regularized eq} to the solution $u$ of the corresponding singular equation. It is then natural to ask about the rate of this convergence in applications. The case with $p'=2$ and $p=1$ corresponds to the level-set mean curvature flow equation, for which such a regularization was introduced in \cite{ES}. The convergence rate in this case was essentially established by Deckelnick \cite{De}, and later proved by Mitake \cite{Mi} using a different method based on comparison arguments. Addressing convergence rates for approximations as in \eqref{regularized eq} is another main objective of the present paper.
	
	
	The method employed in this paper builds on the arguments developed by Mitake in \cite{Mi}, as well as earlier works by Cockburn, Gripenberg and Londen \cite{CGL} and Jakobsen and Karlsen \cite{JK1}, which established general continuous dependence estimates for viscosity solutions of nonlinear parabolic equations. Further related results include extensions to nonlocal PDEs in \cite{JK2} and to general Neumann-type boundary value problems in \cite{JG}. While it may be possible to adapt these methods to study quantitative stability for $p$-Laplace type equations in regimes without vanishing-gradient singularities, precise estimates do not appear to have been explicitly given, to the best of our knowledge. Also, the singular case, such as \eqref{vp-parabolic0} with $1<p<2$, seems unexplored and constitutes one of the focuses of the present work. 
	
	A recent paper of Bungert \cite{Bun} studies the convergence rate of $p$-harmonic functions to infinity-harmonic functions, using partly the variational structure of the $p$-Laplace operator. In contrast, based on the viscosity solution theory, we here provide a purely PDE approach to show the convergence rate for a general class of related parabolic stability problems. Our method further develops the techniques in \cite{CGL, JK1, Mi} and extends them to treat convergence rates arising from more general stability and regularization problems.

	As explained through the examples above, the equations of interest may exhibit singularity at vanishing gradients. To handle such singularities, we employ a generalized notion of viscosity solutions, known as $\mathcal{F}$-solutions. A brief overview of this approach is given in Section~\ref{sec:preli}; for a detailed introduction, we again refer the reader to \cite{OS, Gbook}.

	\subsection{Main results}
	Let us now introduce more details about our setting. Let $T>0$ and $\Om\subset\Rn$ be a bounded domain. In the sequel, we set $\Om_T=\Omega\times (0, T)$ and $\Rn_0=\R^n\setminus \{0\}$. Consider a family of parabolic equations
	\begin{equation}\label{general parabolic approx eq}
		\partial_t u-\tr(A_\vep(\nabla u)\nabla^2 u)+H_\vep(x, t, \nabla u)=0 \quad \text{in $\Om_T$}
	\end{equation}
	for $\vep\geq 0$, where $A_\vep: \Rn_0\to \S^n_+$ and $H_\vep:\Omega_T \times \R^{n}\mapsto \R$  are continuous functions satisfying appropriate assumptions to be introduced below. 
	
	
	Let $\partial_p \Om_T$ denote the parabolic boundary of $\Omega\times (0, T)$,  defined by
	\[
	\partial_p \Om_T=(\Omega\times \{0\})\cup (\partial \Omega\times [0, T)).
	\]
	We impose the initial and boundary condition 
	\begin{equation}\label{parabolic approx bdry-cond}
		u =g_\vep \quad \text{on $\partial_p \Om_T$}
	\end{equation}
	for \eqref{general parabolic approx eq}, where $g_\vep\in C(\partial \Om_T)$ are given for $\vep\geq 0$.   
	
	
	Suppose that $u_\vep$ (with $\vep\geq 0$) is a unique viscosity solution of \eqref{general parabolic approx eq}\eqref{parabolic approx bdry-cond}. Our main result quantifies the convergence of $u_\vep\to u_0$ in terms of $\vep>0$ under certain assumptions on the convergence rate of $A_\vep\to A_0$, $H_\vep\to H_0$, and $g_\vep \to g_0$ as $\vep\to 0$.
	
	Now let us state our assumptions on $A_\vep$ and $H_\vep$ with $\vep\geq 0$. 
	Note first that for a matrix $M\in \S^n_+$, one can find its square root, denoted by $M^{1/2}$, which is also nonnegative. There thus exists the matrix valued function ${A_\vep}^{1/2}: \Rn_0\to \S^n_+$ for $\vep\geq 0$. We impose the following assumption on $A_\vep$ and $H_\vep$.
	\begin{enumerate}
		\item[(A1)] There exists $k>2$ such that for any $\vep\geq 0$ small, $f(x)=|x|^k$ satisfies 
		\begin{equation}\label{singular-cancel}
			\tr \left(A_\vep(\nabla f(x)) \nabla^2 f(x)\right)\to 0 \quad \text{as $x\to 0$.}
		\end{equation} 
	\end{enumerate}
	\begin{enumerate}
		\item[(A2)] $H_\vep: \Om_T\times \R^n\to \R$ is continuous, and there exists $L_H\geq0$ such that 
		\[
		|H_\vep(x_1, t_1, \xi)-H_\vep(x_2, t_2, \xi)|\leq L_H(1+|\xi|)\left(|x_1-x_2|+\abs{t_1-t_2}\right)
		\]
		for all $(x_1, t_1),\ (x_2, t_2)\in \Omega_T$,  $\xi\in \R^n$ and all $\vep\geq 0$. 
		
	\end{enumerate}
	These assumptions guarantee the existence and uniqueness of continuous $\F$-solutions to the associated Cauchy-Dirichlet problem; see \cite{Gbook}. The existence of $k>2$ required in (A1) is an assumption related to possible singularities of the equation at vanishing gradients. A typical example of singular operators we will consider in our later study is the case of  the stability of the $p$-Laplacian for $p$ near a fixed $q\in (1, 2)$, where $A_\vep$ is taken to be 
	\begin{equation}\label{vp-operator}
		A_\vep(\xi)= |\xi|^{q+\vep-2} \left(I+(q+\vep-2) \frac{\xi\otimes \xi}{|\xi|^2}\right)
	\end{equation}
	for $\vep\geq 0$. In this case, one can choose  $k>q/(q-1)$ for $\vep\geq 0$ sufficiently small.

	For general $A_\vep$ and $H_\vep$, we next impose assumptions on the convergence of $A_\vep\to A_0$, $H_\vep\to H_0$ as $\vep\to 0$: 
	\begin{enumerate}
		\item[(C1)] There exist $c_A>0$ and $\alpha>0, \beta>\frac{2-k}{2(k-1)}$ such that 
		\begin{equation}\label{eq:c1}
			\|A_\vep(\xi)^{1/2} -A_0(\xi)^{1/2}\|\leq c_A\vep^{\alpha} (1+|\xi|^\beta)
		\end{equation}
		holds for all $\vep>0$ small and $\xi\in \Rn_0$. Here, $k$ is the constant given in (A1). 
		\item[(C2)] There exist $c_H>0$ and $\gamma>0$  such that 
		\[
		|H_\vep(x, t, \xi)-H_0(x, t, \xi)|\leq c_H \vep^\gamma 
		\] 
		holds for all $\vep>0$ small and $(x, t)\in \Om_T,  \xi\in \R^n$.
	\end{enumerate}
	Note that by the condition $\beta>\frac{2-k}{2(k-1)}$ in (C1), we have $\beta>-1/2$ for $k>2$. It is worth remarking that one can generalize (C1) by assuming the existence of $\Sigma_\vep: \R^n_0\to \R^{m\times n}$ with $m\in \N$ for all $\vep\geq 0$ small such that $\Sigma_\vep^T\Sigma_\vep=A_\vep$ in $\R^n_0$ and \eqref{eq:c1} holds for $\Sigma_\vep$ in place of $A_\vep^{1/2}$. For clarity and simplicity, we restrict our attention to the case $\Sigma_\vep=A_\vep^{1/2}$, which already covers the applications of interest.  
	
	Let us present our main theorem.
	

	\begin{thm}[Quantitative stability for parabolic equations]\label{thm:parabolic}
		Let $T>0$ and $\Omega$ be a bounded domain in $\R^n$. For $\vep\geq 0$, assume that $A_\vep: \R^n_0\to \S_+^n$ and ${H_\vep: \Omega_T\times \R^n\to \R}$ are continuous functions satisfying (A1)(A2) and (C1)(C2). Let $g_\vep\in C(\partial_p \Om_T)$ and $u_\vep\in C(\Oba\times [0, T))$ be bounded $\F$-solutions of  \eqref{general parabolic approx eq}\eqref{parabolic approx bdry-cond}. Assume in addition that $u_\vep$ are equi-H\"older continuous in $\ol{\Om_T}$, that is, there exist $L>0$ and $\theta\in (0, 1]$ such that,   
		\begin{equation}\label{equi-holder}
			|u_\vep(x_1, t_1)-u_\vep(x_2, t_2)|\leq L(|x_1-x_2|^\theta +|t_1-t_2|^\theta) 
		\end{equation}
		for all $(x_1, t_1), (x_2, t_2)\in \Oba\times [0, T)$ and $\vep\geq 0$ small. 
		Then, for all $\vep>0$ small, 
		\begin{equation}\label{error-est parabolic}
			\sup_{\Oba\times [0, T)}|u_\vep-u_0|\leq \sup_{\partial_p \Om_T}|g_\vep-g_0|+C\vep^{\nu}  +c_H T\vep^\gamma,
		\end{equation}
		holds with 
		\begin{equation}\label{nu-exponent}
			\nu=\frac{\alpha\theta}{1+(1-\theta)\max\{\beta, 0\}},    
		\end{equation}
		where $C>0$ depends on $T, n, k, L_A, L_H, \beta, c_A$ as well as the H\"older exponent $\theta$ and the constant $L$. Here, $k>2$, $L_A\geq0$, $L_H\geq0$ appear in (A1)(A2), and $\alpha, \beta, \gamma, c_A, c_H\in \R$ are given in (C1)(C2). 
	\end{thm}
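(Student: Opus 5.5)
We argue by doubling the variables, following \cite{CGL, JK1, Mi}, with the penalty built from the function $f(x)=|x|^k$ of (A1) so that the singularity at vanishing gradients is absorbed. Set $M_\vep=\sup_{\partial_p\Om_T}|g_\vep-g_0|$. It suffices to bound $\sup(u_\vep-u_0)$ from above; the reverse inequality follows by exchanging the roles of $u_\vep$ and $u_0$. For parameters $\delta>0$ (spatial scale), $\sigma>0$ (time scale) and a small $\lambda>0$, consider
\[
\Phi(x,t,y,s)=u_\vep(x,t)-u_0(y,s)-\frac{|x-y|^k}{k\delta^{k-1}}-\frac{|t-s|^2}{2\sigma}-(c_H\vep^\gamma+\lambda)t
\]
on $(\Oba\times[0,T))^2$, and suppose $\sup(u_\vep-u_0)-M_\vep-c_HT\vep^\gamma$ is large. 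Let $(\hat x,\hat t,\hat y,\hat s)$ be a maximum point. The equi-H\"older bound \eqref{equi-holder} and $\Phi(\hat x,\hat t,\hat y,\hat s)\ge\Phi(\hat x,\hat t,\hat x,\hat t)$ give $|\hat x-\hat y|^k/\delta^{k-1}\le CL|\hat x-\hat y|^\theta$. Hence $|\hat x-\hat y|\lesssim \delta^{(k-1)/(k-\theta)}$, and the penalty value is $\lesssim\delta^{\theta(k-1)/(k-\theta)}$; similarly in $t$. If the maximum lies on the parabolic boundary, H\"older continuity together with the bound $M_\vep$ gives the estimate with an error $\omega(\delta,\sigma)$. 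Otherwise we are at an interior point.

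\textbf{Case $\hat x=\hat y$.} We use the $\F$-solution definition: the test function $x\mapsto |x-\hat y|^k/(k\delta^{k-1})$ has vanishing gradient and, by (A1), admissible degenerate behaviour. The time derivatives give $c_H\vep^\gamma+\lambda\le 0$, a contradiction. (Here (A2) only contributes a term of size $L_H(|\hat x-\hat y|+|\hat t-\hat s|)=L_H|\hat t-\hat s|$, which is small.)

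\textbf{Case $\hat x\neq\hat y$.} Set $\xi=|\hat x-\hat y|^{k-2}(\hat x-\hat y)/\delta^{k-1}\neq0$. The parabolic Crandall--Ishii lemma yields $(a,\xi,X)$ and $(b,\xi,Y)$ with $a-b=c_H\vep^\gamma+\lambda$ and the standard matrix inequality with $Z=\nabla^2 f$ scaled. Subtracting the two viscosity inequalities and using (A2) and (C2), we arrive at
\[
\lambda\le \tr(A_\vep(\xi)X)-\tr(A_0(\xi)Y)+L_H(1+|\xi|)(|\hat x-\hat y|+|\hat t-\hat s|).
\]
Multiplying the matrix inequality by $\bigl(A_\vep^{1/2},A_0^{1/2}\bigr)$ in the usual way bounds the trace difference by $C\|A_\vep^{1/2}(\xi)-A_0^{1/2}(\xi)\|^2\,\|Z\|$ with $\|Z\|\sim |\hat x-\hat y|^{k-2}/\delta^{k-1}$. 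By (C1) this is at most
\[
C\vep^{2\alpha}(1+|\xi|^{2\beta})\,\frac{|\hat x-\hat y|^{k-2}}{\delta^{k-1}} .
\]

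\textbf{Main obstacle.} The key step is controlling this term when $|\hat x-\hat y|$ is tiny. For $\beta<0$, the factor $|\xi|^{2\beta}\,|\hat x-\hat y|^{k-2}$ equals $|\hat x-\hat y|^{(k-1)2\beta+k-2}\delta^{-2\beta(k-1)}$. The condition $\beta>(2-k)/(2(k-1))$ is exactly what makes this exponent positive, so the term stays bounded by the H\"older-derived upper bound on $|\hat x-\hat y|$. For $\beta\ge0$ we have $|\xi|\lesssim \delta^{-(1-\theta)(k-1)/(k-\theta)}$, using the same bound. The $L_H$ term is handled similarly: since $|\xi||\hat x-\hat y|$ is controlled by the penalty, it is $O(\text{penalty})$. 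Note that the term $|\xi|\,|\hat t-\hat s|$ may require an extra time-localization or a Lipschitz-in-time trick; I expect to adjust $\sigma$ for this.

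\textbf{Choice of parameters.} We obtain a bound of the form
\[
\sup(u_\vep-u_0)\le M_\vep+c_HT\vep^\gamma+C\Bigl(\delta^{\theta(k-1)/(k-\theta)}+\vep^{2\alpha}\delta^{-\kappa}+\dots\Bigr),
\]
where the last expression is what $\lambda$ must exceed to produce a contradiction. We then optimize over $\delta$, with $\sigma$ chosen comparably, and let $\lambda\to0$. Balancing $\delta$-error against $\vep^{2\alpha}\delta^{-\kappa}$ should reproduce $\nu=\alpha\theta/(1+(1-\theta)\max\{\beta,0\})$. The $\max\{\beta,0\}$ enters only through the large-gradient growth when $\beta>0$; the precise bookkeeping of the exponents in $k$ is the delicate computation, and I would carry it out by working with the normalized variable $r=|\hat x-\hat y|/\delta$.
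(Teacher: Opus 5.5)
Your proposal follows the paper's proof in substance. The shared ingredients are: doubling both space and time with an $|x-y|^k$ penalty; bounding the gaps from $\Phi(\hat x,\hat t,\hat y,\hat s)\ge\Phi(\hat x,\hat t,\hat x,\hat t)$ and Hölder continuity; the Crandall--Ishii lemma (the $(n+1)$-dimensional elliptic version, since time is doubled, not the parabolic one); testing the matrix inequality against $A_\vep^{1/2}\zeta\oplus A_0^{1/2}\zeta$ together with (C1); the $\F$-definition when $\xi=0$; and optimizing in $\delta$ (your $|x-y|^k/(k\delta^{k-1})$ is only a reparametrization of the paper's $|x-y|^k/\delta^k$).

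The step you left open closes as you guessed. Take $\sigma=\rho^{2-\theta}$ with $\rho=\delta^{(k-1)/(k-\theta)}$, so that $|\hat t-\hat s|\lesssim\rho$ too; the paper gets this by using the same $\delta^k$ in both penalties. Then $L_H|\xi||\hat t-\hat s|\lesssim\rho^{\theta-1}\rho=\rho^\theta$. Two small corrections: $\lambda$ must be fixed equal to the interior error, entering the final bound as $\lambda T$ rather than being sent to $0$; and you need a barrier such as $b/(T-t)$ so that the maximum is attained in $[0,T)$.
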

	
	This result applies to important quasilinear parabolic equations with vanishing gradient singularities, for which typically (C1) holds with $\beta<0$. While the parameter $\beta<0$ represents the singularity strength, the term $\max\{\beta, 0\}$ in \eqref{nu-exponent} indicates that our estimate \eqref{error-est parabolic} does not capture any influence of this singularity. Moreover, for equi-Lipschitz solutions, corresponding to the case $\theta=1$, we always have $\nu=\alpha$, and the behavior of $A_\vep(\xi)$ with respect to the gradient variable $\xi$ essentially plays no decisive role in determining the stability estimate.
	
	Our proof of Theorem~\ref{thm:parabolic} develops the standard comparison argument for possibly degenerate or singular parabolic equations, incorporating the H\"older regularity of solutions into the estimates. Without effecting these estimates, this result can be readily adapted to the spatially periodic setting by simply taking $\Omega$ to be a torus; see Remark~\ref{rmk:periodic} for more details. Moreover, the assumption of H\"older continuity in time in \eqref{equi-holder} can be removed when vanishing gradients induce only a mild singularity of the equation. Such an improvement follows from the use of the parabolic Crandall–Ishii lemma, which avoids doubling the time variables and thereby simplifies the proof.
	
	While the Hölder regularity of solutions is not the primary focus of this paper, our method relies on the equi-H\"older continuity of $\F$-solutions up to the boundary. In general, the validity of such estimates may depend on the equation structure, boundary data, and the geometry of $\Om$. In the case of $p$-Laplace type parabolic equations, we refer the reader to \cite{Do11, JS17, AP18, IJS19} for related interior regularity results, and to \cite{DZ24,Lee2025} for boundary estimates. Note that both bodies of literature are extensive, and we cite only a selection here. We also remark that the regularity theory for the general equation \eqref{general parabolic lim eq} remains only partially understood.

	We are also interested in the elliptic variant of such stability problems. Consider now
	\begin{equation}\label{general approx eq}
		-\tr(A_\vep(\nabla u)\nabla^2 u)+H_\eps(x, u, \nabla u)=0 \quad \text{in $\Omega$}
	\end{equation}
	with the Dirichlet boundary condition 
	\begin{equation}\label{approx bdry-cond}
		u =g_\vep \quad \text{on $\partial \Omega$}
	\end{equation}
	for given $g_\vep\in C(\partial \Omega)$. 
	While our approach to parabolic problems can be readily extended to the elliptic case under similar assumptions on $A_\vep$ and $H_\vep$, we need to additionally assume a monotonicity condition on $H_\vep$ with respect to $u$ for all $\vep\geq 0$. More precisely, we assume that there exists $\lambda>0$ such that 
	\begin{equation}\label{mono-0}
		\rho\mapsto H_\vep(x, \rho, \xi)-\lambda \rho \quad \text{is nondecreasing in $\R$. }
	\end{equation}
	We obtain the same convergence rate as in Theorem \ref{thm:parabolic}; see Theorem \ref{thm:elliptic}. However, it is not clear to us how to drop or relax the monotonicity assumption \eqref{mono-0}.

	\subsection{Applications}
	
	Our general framework in Theorem~\ref{thm:parabolic} yields stability estimates for a broad class of quasilinear parabolic equations including the normalized \eqref{np-parabolic0} $p$-Laplace equation and variational $p$-Laplace equation \eqref{vp-parabolic0} and their further generalizations, as well as regularizing approximations for degenerate $p$-Laplace equations ($1\leq p\leq \infty$). Below we summarize our major convergence rate results obtained for these equations. See Section~\ref{sec:app} for precise statements, detailed analysis, concrete examples, and further discussions. 
	
	\begin{itemize}
		\item (Normalized $p$-Laplacian) Let $u_p$ be a solution of \eqref{np-parabolic0}\eqref{parabolic bdry-cond} for $1\leq p<\infty$. For any fixed $1\leq q<\infty$, if $u_p$ are spatially equi-H\"older continuous with exponent $\theta\in (0, 1]$ for all $p$ near $q$, then $u_p\to u_q$ uniformly as $p\to q$ with convergence rate $O(|p-q|^\theta)$. 
		\item (Variational $p$-Laplacian) Let $u_p$ be a solution of \eqref{vp-parabolic0}\eqref{parabolic bdry-cond} for $1<p<\infty$. For any fixed $1\leq q<\infty$, if $u_p$ are equi-H\"older continuous with exponent $\theta\in (0, 1]$ for all $p$ near $q$, then, depending on $p$, $u_p\to u_q$ uniformly as $p\to q$ with convergence rate 
		\begin{enumerate}
			\item[(i)] $O(|p-q|^\theta)$ when $p<2$ (and hence $q\leq 2$);
			\item[(ii)] $O(|p-q|^\nu)$ with any 
			\[
			0<\nu< \frac{2\theta}{(1-\theta)q+2\theta}
			\] 
			when $p>2$ (and hence $q\geq 2$).
		\end{enumerate}
		\item (Regularization for generalized $p$-Laplacian) Let $p\geq 1$, $p'\geq 2$. For all $\vep>0$ small, let $u_\vep$ be a solution of \eqref{regularized eq}
		and satisfy $u_\vep=g$ on $\partial_p \Omega_T$ for $g\in C(\partial_p\Omega_T)$. As $\vep\to 0$, $u_\vep\to u$ uniformly, where $u$ is the unique solution of
		\begin{equation*}
			\partial_t u-\abs{\nabla u}^{p'-p}\div\left(\abs{\nabla u}^{p-2}\nabla u\right)=0
		\end{equation*}
		satisfying the same boundary condition. If $u_\vep$ is spatially equi-H\"older continuous with exponent $\theta\in (0, 1]$ for $\vep>0$ small, then we obtain the rate $O(\vep^\nu)$ for the uniform convergence $u_\vep\to u$, where $\nu>0$ can be chosen as follows: 
		\begin{enumerate}
			\item[(i)] If $p'=2$, then $0<\nu< \theta/2$;
			\item[(ii)] If $2<p'< 3$, then $\nu=(p'-2)\theta$; 
			\item[(iii)] If $3\leq p'\leq 4$, then $0<\nu<\theta$;
			\item[(iv)] If $p'>4$, then 
			\[
			0<\nu< \frac{\theta}{1+(1-\theta)(p'-4)}. 
			\]
		\end{enumerate}
	\end{itemize}

	In addition to the applications above, our results can also be applied to study parabolic quantitative stability for general $p$-Laplace type equations that cover both the normalized and variational cases, as well as a regularizing approximation for biased infinity-Laplace equations arising in the stochastic tug-of-war games. Several examples for quasilinear elliptic equations are also discussed in Section~\ref{sec:app}.  
	
	Our main theorem applies to the vanishing viscosity limit for Hamilton-Jacobi equations as well. For $\vep\geq 0$ small, consider the solution $u_\vep$ to the initial boundary value problem for the Hamilton-Jacobi equation
	\[
	\partial_t u -\vep \Delta u+H(x, t, \nabla u)=0 \quad \text{in $\Omega_T$,}
	\]
	where the Hamiltonian $H$ is assumed to satisfy the same regularity assumption as in (A2). Then, for fixed initial and boundary data independent of $\vep$, Theorem~\ref{thm:parabolic} immediately yields the convergence rate $\|u_\vep-u_0\|_{L^\infty(\Omega_T)}=O(\vep^{1/2})$ for equi-Lipschitz solutions $u_\vep$ and the limit solution $u_0$ of the corresponding inviscid Hamilton-Jacobi equation. This recovers the classical result of \cite{Fl}, originally established using an approach based on differential games, and later obtained in \cite{CL1, CL2} via viscosity solution theory and in \cite{Tr} using the nonlinear adjoint method. As this application is not the main focus of the present paper, we do not pursue it further, but instead refer the reader to the aforementioned references, as well as \cite{QSTY, CD, CG} for more recent progress on improved rates in the convex Hamiltonian case. 
	
	In this work, we do not seek general optimality of the convergence rates. We only discuss the sharpness of our estimates through some examples, including in particular the stability of spatially periodic solutions to normalized $p$-Laplace equations under perturbations of $p$. The above convergence rates for the regularization of generalized $p$-Laplacian are consistent with the estimate obtained in \cite{Mi} for Lipschitz solutions of regularized level set curvature flow equations, which corresponds to the case $p=1$, $p'=2$ in our general setting. 
	
	\subsection*{Acknowledgments}
	
	The authors would like to thank Hiroyoshi Mitake, F\'elix del~Teso and Denis Brazke for helpful discussions. The work of QL was supported by JSPS Grant-in-Aid for Scientific Research, No.~22K03396.

	\section{Preliminaries}\label{sec:preli}
	In this section, we briefly review some basics about viscosity solutions of \eqref{general parabolic lim eq}. The notion of solutions we adopt for fully nonlinear singular equations is called $\F$-solutions, originated from the works \cite{CGG0, CGG, IS} mainly for geometric evolution equations. This framework was developed by \cite{OS} to treat $p$-Laplace type equations. We also refer to the monograph \cite{Gbook} for a comprehensive introduction. 
	
	Let $T>0$ and $\Omega\subset \R^n$ be an open set. Consider quasilinear parabolic equations of the form \eqref{general parabolic lim eq},
	where $A: \R^n_0\to \S^n_+$ and $H: \Om_T\times \R^n\to \R$ are assumed to be continuous. 
	
	Assume that there exists a nonempty set 
	\begin{equation}\label{f-fun}
		\F\subset\{f\in C^2([0, \infty)): f(0)=f'(0)=f''(0)=0, \ f''(r)>0, \ \forall \ r>0\}    
	\end{equation}
	such that for all $f\in \F$, $R_f(x)=\pm f(|x|)$ satisfy 
	\begin{equation}\label{singular-cancel0}
		\|A(\nabla R_f(x))\nabla^2 R_f(x))\|\to 0 \quad \text{as $x\to 0$.}
	\end{equation}
	In many of our applications, we can take $R_f$ of the form $|x|^k$ and ask the existence of $k>0$ that guarantees the condition \eqref{singular-cancel0}. 
	
	Let us now introduce the class of admissible test functions for \eqref{general parabolic lim eq}. 
	\begin{defi}[Compatible test functions]
		A function $\psi\in C^2(\Omega_T)$ is said to be \emph{compatible} with $A$ if for any $z_0=(x_0, t_0)\in \Omega_T$ with $\nabla \psi(z_0)=0$, there exists $f\in \F$ such that for any $z=(x, t)$ near $z_0$, 
		\begin{equation}\label{c2f}
			|\psi(z)-\psi(z_0)-\partial_t \psi(z_0)(t-t_0)|\leq f(|x-x_0|)+o(|t-t_0|).
		\end{equation}
		We denote by $C^2_A(\Omega_T)$ the set of all $\psi\in C^2(\Omega_T)$ that are compatible with $A$. These are also sometimes called admissible test functions in the literature.
	\end{defi}

	We next use the compatible test function class to define $\F$-solutions of \eqref{general parabolic lim eq}. 
	\begin{defi}\label{defi:f-sol}
		A locally bounded, upper semicontinuous function $u$ in $\Omega_T$ is called an \emph{$\F$-subsolution} of \eqref{general parabolic lim eq} if whenever there exist $z_0\in \Omega_T$ and $\psi\in C^2_A(\Omega_T)$ such that $u-\psi$ attains a local maximum at $z_0$, we have
		\begin{numcases}{}
			\partial_t \psi(z_0)-\tr (A(\nabla \psi(z_0))\nabla^2 \psi(z_0))+H(z_0, \nabla \psi(z_0))\leq 0 & \text{if $\nabla\psi(z_0)\neq 0$,}\label{sub off singular}\\
			\partial_t \psi(z_0)+H(z_0, \nabla \psi(z_0))\leq 0 & \text{if $\nabla\psi(z_0)=0$.} \label{sub on singular}
		\end{numcases}
		Analogously, a locally bounded, lower semicontinuous function $u$ in $\Omega_T$  is called an \emph{$\F$-supersolution} of \eqref{general parabolic lim eq} if whenever there exist $z_0\in \Omega_T$ and $\psi\in C^2_A(\Omega_T)$ such that $u-\psi$ attains a local minimum at $z_0$, we have
		\begin{numcases}{}
			\partial_t \psi(z_0)-\tr (A(\nabla \psi(z_0))\nabla^2 \psi(z_0))+H(z_0, \nabla \psi(z_0))\geq 0 & \text{if $\nabla\psi(z_0)\neq 0$,} \label{super off singular} \\
			\partial_t \psi(z_0)+H(z_0, \nabla \psi(z_0))\geq 0 & \text{if $\nabla\psi(z_0)=0$.} \label{super on singular}
		\end{numcases}
		A function $u\in C(\Omega_T)$ is called an \emph{$\F$-solution} of \eqref{general parabolic lim eq} if it is both an {$\F$-subsolution} and an {$\F$-supersolution} of \eqref{general parabolic lim eq}. 
	\end{defi}
	
	Note that if $A$ is continuous in $\R^n$, then the class $\F$ contains all functions $f(r)=r^k$ with $k>2$, and the corresponding $\F$-solutions are known to coincide with the standard viscosity solutions in this non-singular case; see for example \cite[Proposition~2.28]{Gbook} for the proof of this fact. Hence, Definition~\ref{defi:f-sol} can be viewed as a generalized notion of viscosity solutions that applies also to parabolic equations with singularities at vanishing gradients.  
	
	A typical example of such singular equations, as already mentioned in the introduction, is the parabolic $p$-Laplace equation \eqref{vp-parabolic0} for $p\in (1, 2)$. In this case, 
	\[
	A(\xi)= |\xi|^{p-2} \left(I+(p-2) \frac{\xi\otimes \xi}{|\xi|^2}\right), \quad \xi\in \R^n_0,
	\]
	and choosing $R_f(x)=\pm |x|^k$ satisfies \eqref{singular-cancel} provided that  $k>p/(p-1)$. In other words, for such $k$, the function $f(r)=r^k$ belongs to the class $\mathcal{F}$ and plays a key part in the construction of test functions to handle the singularities of the equation.

	
	Let us also include the elliptic version of the definition of $\F$-solutions for later use. Let $\Omega$ be a domain in $\R^n$ and consider the elliptic equation
	\begin{equation}\label{general elliptic}
		-\tr(A(\nabla u)\nabla^2 u)+H_\eps(x, u, \nabla u)=0 \quad \text{in $\Omega$,}
	\end{equation}
	where $A: \R^n_0\to \S^n_+$ and $H: \Omega\times \R\times \R^n\to \R$ are continuous. In this case, we continue to assume that \eqref{singular-cancel0} holds with a nonempty class $\F$. The associated class of admissible test functions is defined by simply suppressing the time variable in the parabolic case. More precisely, let $C_A^2(\Omega)$ denote the set of all test functions compatible with $A$ in this case. We say $\psi\in C^2_A(\Omega)$ if for any $x_0\in \Omega$ satisfying $\nabla\psi(x_0)=0$, there exists $f\in \F$ such that 
	\[
	|\psi(x)-\psi(x_0)|\leq f(|x-x_0|).
	\]
	
	\begin{defi}\label{defi:f-sol elliptic}
		A locally bounded, upper semicontinuous function $u$ in $\Omega$ is called an \emph{$\F$-subsolution} of \eqref{general elliptic} if whenever there exist $x_0\in \Omega$ and $\psi\in C^2_A(\Omega)$ such that $u-\psi$ attains a local maximum at $x_0$, we have
		\begin{numcases}{}
			-\tr (A(\nabla \psi(x_0))\nabla^2 \psi(x_0))+H(x_0, u(x_0), \nabla \psi(x_0))\leq 0 & \text{if $\nabla\psi(x_0)\neq 0$,}\notag \\
			H(x_0, u(x_0), \nabla \psi(x_0))\leq 0 & \text{if $\nabla\psi(x_0)=0$.} \notag
		\end{numcases}
		Analogously, a locally bounded, lower semicontinuous function $u$ in $\Omega$  is called an {$\F$-supersolution} of \eqref{general elliptic} if whenever there exist $x_0\in \Omega$ and $\psi\in C^2_A(\Omega)$ such that $u-\psi$ attains a local minimum at $x_0$, we have
		\begin{numcases}{}
			-\tr (A(\nabla \psi(x_0))\nabla^2 \psi(x_0))+H(x_0, u(x_0), \nabla \psi(x_0))\geq 0 & \text{if $\nabla\psi(x_0)\neq 0$,}\notag \\
			H(x_0, u(x_0), \nabla \psi(x_0))\geq 0 & \text{if $\nabla\psi(x_0)=0$.} \notag
		\end{numcases}
		A function $u\in C(\Omega)$ is called an {$\F$-solution} of \eqref{general elliptic}  if it is both an {$\F$-subsolution} and an {$\F$-supersolution} of \eqref{general elliptic}. 
	\end{defi}

	\section{Quantitative stability}

	Let us prove Theorem \ref{thm:parabolic} by developing the doubling variable technique that is often used in the proof of comparison principles.  
	
	\begin{proof}[Proof of Theorem \ref{thm:parabolic}]
		Below let us show in detail that, for any $\vep>0$ and all $t\in[0,T)$, 
		\begin{equation}\label{error-est1}
			\sup_{\ol{\Om_T}} (u_\vep-u_0)\leq \sup_{\partial_p\Om_T} |g_\vep-g|+C\vep^{\nu}+c_H \vep^\gamma
		\end{equation}
		holds for some $C>0$. We can exchange $u_0$ and $u_\vep$ to obtain the same estimate for $\sup_{\ol{\Om_T}} (u_0-u_\vep)$.
		
		
		\noindent {\bf Step 1. Doubling variables.}  For $\vep>0$, we assume that $\sup_{\ol{\Om_T}} (u_\vep-u_0)>0$, for otherwise we immediately obtain the desired estimate. Let $b>0$, $M>0$ to be chosen later and denote $K_T=\ol{\Omega}\times [0, T)$. Define
		\begin{equation*}
			\Psi(x,t)=u_\eps(x,t)-u_0(x,t)-Mt-\frac{2b}{T-t}, \quad (x,t)\in K_T.
		\end{equation*}
		By the boundedness of $u_\eps$, there exists a maximizer $(\hat{x},\hat{t})\in K_T$ of $\Psi$, that is, 
		\begin{equation*}
			\Psi(\hat{x},\hat{t})=\max_{(x, t)\in \Oba\times [0, T)}\Psi(x,t).
		\end{equation*}
		Based on the standard doubling variable technique, we define
		\begin{equation}\label{double variable1}
			\Psi_\delta(x,t,y,s)=u_\eps(x,t)-u_0(y,s)-\frac{\abs{x-y}^k}{\delta^k}-\frac{\abs{t-s}^k}{\delta^k}-Mt-\frac{b}{T-t}-\frac{b}{T-s}
		\end{equation}
		for $\delta\in(0,1)$, $(x,t),(y, s)\in K_T$.  For each $\delta$, we have a maximizer $(x_\delta,t_\delta,y_\delta,s_\delta)\in K_T\times K_T$.
		It follows that
		\begin{equation*}
			\Psi_\delta(x_\delta,t_\delta,y_\delta,s_\delta)\geq  \Psi(\hat{x}, \hat{t})
		\end{equation*}
		and thus
		\[
		\begin{aligned}
			\frac{|x_\delta-y_\delta|^k}{\delta^k}+\frac{|t_\delta-s_\delta|^k}{\delta^k}\leq\  & u_\vep(x_\delta, t_\delta)-u_0(y_\delta, s_\delta) -u_\vep(\hat{x}, \hat{t})+u_0(\hat{x}, \hat{t})\\ & -M(t_\delta-\hat{t})-\frac{b}{T-t_\delta}-\frac{b}{T-s_\delta}+\frac{2b}{T-\hat{t}}.
		\end{aligned}
		\]
		Consequently, we obtain $|x_\delta-t_\delta|\to 0$ and $|t_\delta-s_\delta|\to 0$ as $\delta\to 0$. 

		Since $\Psi_\delta(x_\delta, t_\delta, y_\delta, s_\delta)\geq \Psi(x_\delta, t_\delta)$, we also get
		\begin{equation}\label{holder app1}
			u_0(x_\delta, t_\delta)-u_0(y_\delta, s_\delta)\geq \frac{|x_\delta-y_\delta|^k}{\delta^k}+\frac{|t_\delta-s_\delta|^k}{\delta^k}.
		\end{equation}
		The H\"older continuity of $u_0$ in \eqref{equi-holder} yields 
		\begin{equation}\label{holder app2}
			u_0(x_\delta, t_\delta)-u_0(y_\delta, s_\delta)\leq L(|x_\delta-y_\delta|^\theta+|t_\delta-s_\delta|^\theta)
		\end{equation} 
		Combining \eqref{holder app1} and \eqref{holder app2}, we deduce that 
		\[
		\frac{|x_\delta-y_\delta|^k}{\delta^k}+\frac{|t_\delta-s_\delta|^k}{\delta^k}\leq  L(|x_\delta-y_\delta|^\theta+|t_\delta-s_\delta|^\theta),
		\]
		which implies that 
		\begin{equation}
			\label{lip-est1}
			|x_\delta-y_\delta|\leq 2L \delta^{\frac{k}{k-\theta}}, \quad |t_\delta-s_\delta|\leq 2L \delta^{\frac{k}{k-\theta}}. 
		\end{equation}

		\noindent {\bf Step 2. Application of the Crandall-Ishii Lemma.}
		Assume thriving for a contradiction that both $(x_\delta, t_\delta), (y_\delta, s_\delta)\in \Om_T$. Then we can use the $(n+1)$-dimensional elliptic-type Crandall-Ishii Lemma \cite[Theorem~3.2]{CIL} (or \cite[Lemma~3.5]{OS}) to obtain $(\tau, \xi, X)\in \ol{P}^{2, +}u_\vep(x_\delta, t_\delta)$, $(\sigma, \eta, Y)\in \ol{P}^{2, -}u(y_\delta, s_\delta)$ for each $\mu>0$ such that
		\begin{equation}\label{ishii-1}
			\xi=\eta=\frac{k}{\delta^k} |x_\delta-y_\delta|^{k-2}(x_\delta-y_\delta),
		\end{equation}   
		\begin{equation}
			\label{ishii-time}
			\tau=\frac{k(t_\delta-s_\delta)^{k-1}}{\delta^k}+M+\frac{b}{(T-t_\delta)^2}, \quad \sigma=\frac{k(t_\delta-s_\delta)^{k-1}}{\delta^k}-\frac{b}{(T-s_\delta)^2},
		\end{equation}
		and
		\begin{equation}\label{ishii-2}
			\begin{pmatrix}
				X & 0\\
				0 & -Y
			\end{pmatrix}\leq Z +\mu Z^2
		\end{equation}
		with
		\begin{equation}\label{ishii-z}
			Z= \frac{k(k-2)}{\delta^k} |x_\delta-y_\delta|^{k-4} \begin{pmatrix}
				(x_\delta-y_\delta)\otimes  (x_\delta-y_\delta) & - (x_\delta-y_\delta)\otimes  (x_\delta-y_\delta) \\
				- (x_\delta-y_\delta)\otimes  (x_\delta-y_\delta) &  (x_\delta-y_\delta)\otimes  (x_\delta-y_\delta)
			\end{pmatrix}.
		\end{equation}
		For the definition of parabolic semijets $\ol{P}^{2, +}$ and $\ol{P}^{2, -}$, we refer to Section 8 of \cite{CIL}. By direct computations, it is not difficult to see that 
		\[
		\begin{aligned}
			& Z+\mu Z^2\\
			&=\left(\frac{k(k-2)}{\delta^k} |x_\delta-y_\delta|^{k-4} +C_\mu\right)\begin{pmatrix}
				(x_\delta-y_\delta)\otimes  (x_\delta-y_\delta) & - (x_\delta-y_\delta)\otimes  (x_\delta-y_\delta) \\
				- (x_\delta-y_\delta)\otimes  (x_\delta-y_\delta) &  (x_\delta-y_\delta)\otimes  (x_\delta-y_\delta)
			\end{pmatrix},    
		\end{aligned}
		\]
		where 
		\begin{equation}\label{C-mu}
			C_\mu= \mu  \frac{k^2(k-2)^2}{\delta^{2k}} |x_\delta-y_\delta|^{2k-6}.
		\end{equation}
		
		\noindent {\bf Step 3. Estimates for the first order terms.}
		Let us next estimate the difference between  $H_\vep(x_\delta, t_\delta, \xi)$ and $H_0(y_\delta, s_\delta, \eta)$.
		Using (A3), we deduce 
		\[
		\begin{aligned}
			&H_\vep(x_\delta,t_\delta,  \xi)-H_0(y_\delta,s_\delta,  \eta)\\
			&\geq H_\vep(x_\delta,t_\delta,  \xi) -H_0(x_\delta,t_\delta,  \xi)-L_H(1+\abs{\xi})(\abs{x_\delta-y_\delta}+\abs{t_\delta-s_\delta}).
		\end{aligned}
		\]
		It then follows from (C2) that 
		\begin{equation}\label{h-est1}
			H_\vep(x_\delta, t_\delta,  \xi)-H_0(y_\delta, s_\delta,  \eta)\geq -c_H \vep^\gamma -L_H(1+\abs{\xi})(\abs{x_\delta-y_\delta}+\abs{t_\delta-s_\delta}).
		\end{equation}
		We can use \eqref{lip-est1} and \eqref{ishii-1} to estimate the last term
		\begin{align*}
			L_H(1+\abs{\xi})(\abs{x_\delta-y_\delta}+\abs{t_\delta-s_\delta})&=L_H\left(1+\frac{k}{\delta^k} |x_\delta-y_\delta|^{k-1}\right)(\abs{x_\delta-y_\delta}+\abs{t_\delta-s_\delta})\\
			&\leq L_H\left(1+(2L)^{k-1} k\delta^{\frac{k(k-1)}{k-\theta}-k}\right)\left(4L \delta^{\frac{k}{k-\theta}}\right) \\
			&=4L_HL\left(\delta^{\frac{k}{k-\theta}}+(2L)^{k-1}k\delta^{\frac{k\theta}{k-\theta}}\right).
		\end{align*}
		Taking the minimum between the two exponents for $\delta$, we are led to                      
		\begin{equation}\label{h-est2}
			H_\vep(x_\delta,t_\delta,  \xi)-H_0(y_\delta,s_\delta,  \eta) \geq -c_H \vep^\gamma -C_1 \delta^{\frac{k\theta}{k-\theta}},
		\end{equation}
		where we set
		\[
		C_1=4 LL_H+2^{k+1}L^{k}kL_H.
		\]
		\noindent {\bf Step 4. Estimates for the second order terms.} We consider two cases. 
		
		\noindent Case 1. $\xi=\eta\neq 0$. In this case, multiplying both sides of \eqref{ishii-2} from left and right by $A_\vep(\xi)^{1/2}\zeta\oplus A_0(\xi)^{1/2}\zeta\in \R^{2n}$ with an arbitrary $\zeta\in \R^n$, $|\zeta|=1$, we obtain 
		\[
		\begin{aligned}
			&\la  X A_\vep(\xi)^{1/2}\zeta, A_\vep(\xi)^{1/2}\zeta \ra -\la  Y A_0(\xi)^{1/2}\zeta, A_0(\xi)^{1/2}\zeta \ra \\
			\leq & \left(\frac{k(k-2)}{\delta^k} |x_\delta-y_\delta|^{k-4}  +C_\mu\right) \left|A_\vep(\xi)^{1/2}(x_\delta-y_\delta)-A_0(\xi)^{1/2}(x_\delta-y_\delta)\right|^2,   
		\end{aligned}
		\]
		where $C_\mu>0$ is given by \eqref{C-mu}.  
		Using the arbitrariness of $\zeta$, we thus have 
		\begin{equation}
			\begin{aligned}
				\label{a-est0}
				& \la X A_\vep(\xi)^{1/2},   A_\vep(\xi)^{1/2}\ra - \la Y A_0(\xi)^{1/2}, A_0(\xi)^{1/2}\ra\\
				& \leq \left(\frac{k(k-2)}{\delta^k} |x_\delta-y_\delta|^{k-4}  +C_\mu\right) \left\|A_\vep(\xi)^{1/2}-A_0(\xi)^{1/2}\right\|^2  I. 
			\end{aligned}
		\end{equation}
		By (C1), we have
		\begin{equation*}
			\norm{A_\vep(\xi)^{1/2} -A_0(\xi)^{1/2}}\leq c_A\vep^{\alpha}(1+|\xi|^\beta).
		\end{equation*}
		which implies that
		\begin{align*}
			\norm{A_\vep(\xi)^{1/2} -A_0(\xi)^{1/2}}^2
			&\leq c_A^2\vep^{2\alpha}(1+|\xi|^\beta)^2
			\leq 4c_A^2\eps^{2\alpha}(1+\abs{\xi}^{2\beta}). 
		\end{align*}
		Applying this estimate to \eqref{a-est0}, together with the value of $\xi$, we have
		\[
		\begin{aligned}
			&\tr (A_\vep(\xi)X) -\tr (A_0(\xi)Y)\\
			&\leq \ 4c_A^2 k(k-2)n\delta^{-k}|x_\delta-y_\delta|^{k-2} \vep^{2\alpha}+ 4c_A^2 k^{1+2\beta}(k-2)n\delta^{-k(1+2\beta)} |x_\delta-y_\delta|^{k-2+2(k-1)\beta}\vep^{2\alpha} \\
			&\quad\quad +n C_\mu 4c_A^2\eps^{2\alpha}(1+\abs{\xi}^{2\beta}). 
		\end{aligned}
		\]
		Note that in the estimate above $k-2+2(k-1)\beta>0$ due to the condition $\beta> \frac{2-k}{2(k-1)}$ in (C1).
		
		Adopting \eqref{lip-est1} again, we are led to 
		\begin{equation}\label{a-est1}
			\tr (A_\vep(\xi)X) -\tr (A_0(\xi)Y)\leq C_2\vep^{2\alpha}\left(\delta^{\frac{k(\theta-2)}{k-\theta}}+ \delta^{\frac{k(\theta-1)(1+2\beta)-k}{k-\theta}}\right)+n C_\mu 4c_A^2\eps^{2\alpha}(1+\abs{\xi}^{2\beta}), 
		\end{equation}
		where 
		\[
		C_2=4c_A^2 k(k-2)n (2L)^{k-2}+4c_A^2 k^{1+2\beta}(k-2)n (2L)^{k-2+2(k-1)\beta}.
		\]
		The definition of subsolutions and supersolutions enables us to obtain
		\[
		\tau-\tr (A_\vep(\xi)X)+H_\vep(x_\delta, t_\delta,  \xi)\leq 0
		\]
		and
		\[
		\sigma-\tr (A_0(\eta)Y)+H_0(y_\delta, s_\delta,  \eta)\geq 0.
		\]
		By taking the difference of these inequalities and applying \eqref{h-est2} and \eqref{a-est1}, we deduce 
		\begin{align*}
			&M+\frac{b}{(T-t_\delta)^2}+\frac{b}{(T-s_\delta)^2}=\tau-\sigma\\
			&\leq\tr (A_\vep(\xi)X)-\tr (A_0(\eta)Y)-\left(H_\vep(x_\delta,t_\delta,\xi)-H_0(y_\delta,s_\delta,\eta)\right) \\
			&\leq M_{\vep, \delta}+n C_\mu 4c_A^2\eps^{2\alpha}(1+\abs{\xi}^{2\beta})
		\end{align*}
		for
		\[
		M_{\eps,\delta}=C_2\vep^{2\alpha}\left(\delta^{\frac{k(\theta-2)}{k-\theta}}+\delta^{\frac{k(\theta-1)(1+2\beta)-k}{k-\theta}}\right)+c_H \vep^\gamma +C_1 \delta^{\frac{k\theta}{k-\theta}} 
		\]
		Our estimate no longer depends on the matrices $X$ and $Y$, and thus we can let $\mu\to 0$ to obtain 
		\[
		M+\frac{b}{(T-t_\delta)^2}+\frac{b}{(T-s_\delta)^2}\leq M_{\vep, \delta}. 
		\]
		\noindent Case 2. $\xi=\eta=0$. For this case, we apply the definition of $\F$-subsolutions and $\F$-supersolutions to get
		\[
		H_\vep(x_\delta,t_\delta,  \xi)-H_0(x_\delta,t_\delta,  \xi)\leq -\tau+\sigma\leq -M-\frac{b}{(T-t_\delta)^2}-\frac{b}{(T-s_\delta)^2}  ,
		\]
		which by \eqref{h-est2} yields 
		\[
		M+\frac{b}{(T-t_\delta)^2}+\frac{b}{(T-s_\delta)^2}\leq c_H\vep^\gamma +C_1 \delta^{\frac{k\theta}{k-\theta}}<M_{\eps,\delta}
		\]
		for all $\delta>0$ small. Thus for any $\xi=\eta$, we have the same inequality
		\begin{equation*}
			M+\frac{b}{(T-t_\delta)^2}+\frac{b}{(T-s_\delta)^2}\leq M_{\eps,\delta},	
		\end{equation*}
		which means that for $(x_\delta, t_\delta)$ and $(y_\delta, s_\delta)$ cannot be both in $\Om_T$ if we choose $M=M_{\vep, \delta}$ at the beginning of the proof.
		
		\noindent {\bf Step 5. Conclusion via boundary data.} 
		Our preceding arguments show that either $(x_\delta, t_\delta)\in \partial_p \Om_T$ or $(y_\delta, s_\delta)\in \partial_p \Om_T$. Then,  using the equi-H\"older continuity of $u_\vep$ in \eqref{equi-holder}, we have  
		\begin{equation}\label{bdry-max para}
			\begin{aligned}
				u_\vep(x_\delta, t_\delta)-u_0(y_\delta, s_\delta)&\leq \sup_{\partial_p \Om_T} |g_\vep-g| +L(|x_\delta-y_\delta|^\theta+|t_\delta-s_\delta|^\theta)\\
				&\leq \sup_{\partial_p \Om_T} |g_\vep-g|+4L^2  \delta^{\frac{k\theta}{k-\theta}}. 
			\end{aligned}
		\end{equation}
		Hence, we obtain
		\[
		\begin{aligned}
			u_\vep(x, t)-u_0(x, t)-M_{\vep, \delta} t-\frac{b}{T-t}
			&\leq u_\vep(x_\delta, t_\delta)-u(y_\delta, s_\delta)-M_{\vep, \delta} t_\delta-\frac{b}{T-t_\delta}\\
			&\leq \sup_{\partial_p \Om_T} |g_\vep-g_0| +4L^2  \delta^{\frac{k\theta}{k-\theta}}
		\end{aligned}
		\]
		for all $(x, t)\in \Om_T$. Our estimate no longer depends on the maximizer, and thus we can let $b\to 0$. For all $t\in[0,T)$, we have  
		\begin{align*}
			\label{eq:almostfinal}
			&\max_{x\in \Oba} (u_\vep(x, t)-u_0(x, t))\leq \sup_{\partial_p \Om_T} |g_\vep-g_0| +4L^2  \delta^{\frac{k\theta}{k-\theta}} +M_{\vep, \delta}t \\
			& = \sup_{\partial_p \Om_T} |g_\vep-g_0| +(4L^2+C_1 t)  \delta^{\frac{k\theta}{k-\theta}} +C_2 \vep^{2\alpha}t\left(\delta^{\frac{k(\theta-2)}{k-\theta}}+ \delta^{\frac{k(\theta-1)(1+2\beta)-k}{k-\theta}}\right) +c_H \vep^\gamma t. \numberthis
		\end{align*}
		To finish our proof, we need to optimize the right hand side over $\delta$. Denote
		\begin{equation*}
			a=\frac{k\theta}{k-\theta}>0, \quad b=\frac{k(\theta-2)}{k-\theta}<0, \quad c=\frac{k(\theta-1)(1+2\beta)-k}{k-\theta}<0,
		\end{equation*}
		and as $\delta\in(0,1)$, we have $\delta^b+\delta^c\leq 2\delta^d$
		for $d=\min\{b,c\}<0$. It essentially suffices to minimize 
		\begin{equation*}
			f(\delta)= h_1\delta^a+2h_2\eps^{2\alpha}\delta^d
		\end{equation*}
		with the constants $h_1=4L^2+C_1 t$, $h_2=C_2 \vep^{2\alpha}t$. By differentiating $f$, we see that $f(\delta)$ achieves a strict global minimum at $\delta=\delta_0$, given by 
		\begin{equation*}
			\delta_0=\left(\frac{-2h_2d}{ah_1}\right)^{\frac{1}{a-d}}\eps^{\frac{2\alpha}{a-d}}.
		\end{equation*}
		For $\eps>0$ small enough, we have $\delta_0\in(0,1)$, because $\frac{2\alpha}{a-d}>0$. At the minimizer, 
		\begin{equation*}
			h_1a\delta_0^{a}=-2h_2d\eps^{2\alpha}\delta_0^{d}
		\end{equation*}
		holds and thus
		\begin{equation*}
			h_1\delta_0^a+h_2\eps^{2\alpha}\left(\delta_0^b+\delta_0^c\right)\leq \left(1-\frac{a}{d}\right)h_1\delta_0^a\leq C\eps^{\frac{2\alpha a}{a-d}},
		\end{equation*}
		for $C>0$ independent of $\vep$. The exponent can be simplified based on the value of $\beta$. If $\beta\leq 0$, then $d=b$,
		\begin{equation*}
			a-d=\frac{2k}{k-\theta} \quad \text{ and }\quad  \frac{2\alpha a}{a-d}=\alpha\theta,
		\end{equation*}
		and if $\beta\geq0$, then $d=c$,
		\begin{equation*}
			a-d=\frac{2k(1+\beta(1-\theta))}{k-\theta} \quad \text{ and }\quad  \frac{2\alpha a}{a-d}=\frac{\alpha\theta}{1+\beta(1-\theta)}.
		\end{equation*}

		Using this estimate to optimize over $\delta$ in \eqref{eq:almostfinal}, we are led to the desired estimate
		\[
		\sup_{K_T} (u_\vep-u_0)\leq \sup_{\partial_p \Om_T} |g_\vep-g_0|+C\vep^{\nu}  +c_HT\vep^\gamma
		\]
		for $\nu>0$ given by \eqref{nu-exponent} and some $C>0$. 
		We conclude the proof, noticing that the same estimate for the upper bound of $u_0-u_\vep$ can be proved similarly. 
	\end{proof}
	
	\begin{rmk}
		Our assumptions (C1)(C2) only serve as a convenient prototype for describing the convergence of the operators. These conditions can be generalized by replacing $c_A \vep^\alpha$ and $c_H\vep^\gamma$ with general moduli of continuity $\omega_A(\vep)$ and $\omega_H(\vep)$ respectively. In this case, the proof can be adapted to yield a more general convergence rate 
		\[
		\sup_{\ol{\Omega_T}}|u_\vep-u_0|\leq \sup_{\partial_p \Om_T}|g_\vep-g_0|+C\omega_A(\vep)^{\nu'}  +T\omega_H(\vep) 
		\]
		for some $C>0$ and
		\[
		\nu'=\frac{\theta}{1+(1-\theta)\max\{\beta, 0\}}.
		\]
		It is also possible to substitute the term $|\xi|^\beta$ in \eqref{eq:c1} with other functions of $|\xi|$. However, in this work we restrict our attention to the typical setting and do not intend to treat the result in full generality. 
	\end{rmk}
	
	\begin{rmk}
		Under the same assumptions on the operators, our result immediately implies the uniqueness of H\"older continuous solutions to \eqref{general parabolic lim eq}
		with initial boundary data \eqref{parabolic bdry-cond} by taking $A_\vep=A$, $H_\vep=H$ and $g_\vep=g$ for all $\vep\geq 0$. This is naturally expected, as our proof is based on the comparison arguments.  
	\end{rmk}
	
	\begin{rmk}\label{rmk holder}
		The H\"older continuity in time is used in our proof to handle possible strong singularity of the operator. It can be dropped if $A_\vep: \R^n_0\to \S^n_+$ is bounded for each $\vep\geq 0$. In this case, we only need to apply the doubling variable technique to space variables, considering the maximum of 
		\[
		\Psi_\delta(x, y, t)=u_\eps(x,t)-u_0(x, t)-\frac{\abs{x-y}^k}{\delta^k}-Mt-\frac{b}{T-t}
		\]
		(instead of $\Psi_\delta$ as in \eqref{double variable1}) over  $\Oba\times \Oba\times [0, T)$. We now essentially have a maximizer satisfying $t_\delta=s_\delta$, and thus can still derive \eqref{lip-est1} by using only the H\"older continuity of $u_0$ in space together with the relation $\Psi_\delta(x_\delta, y_\delta, t_\delta)\geq \Psi(x_\delta, t_\delta)$. The boundedness of $A_\vep$ enables us to adopt the parabolic version of Crandall-Ishii Lemma as in \cite[Theorem~8.3]{CIL}, and all of the estimates about semijets in our proof still hold. Avoiding doubling time variables also yields \ref{bdry-max para} with $t_\delta=s_\delta$ under equi-H\"older continuity of $u_\vep$ only in space. With further details omitted, we state the quantitative stability result for such parabolic equations as below. 
	\end{rmk}
	\newpage
	\begin{thm}\label{thm:parabolic2}
		Let $T>0$, $\Omega$,  $A_\vep$, $H_\vep$ and $g_\vep$ satisfy all the assumptions in Theorem \ref{thm:parabolic}. Assume also that $A_\vep$ is bounded in $\R^n_0$ for each $\vep>0$. Let $u_\vep\in C(\ol{\Om_T})$ be bounded $\F$-solutions of  \eqref{general parabolic approx eq}\eqref{parabolic approx bdry-cond}. Assume in addition that $u_\vep(\cdot, t)$ are equi-H\"older continuous in $\overline{\Omega}$, that is, there exist $L>0$ and $\theta\in (0, 1]$ such that,   
		\begin{equation}\label{equi-holder-x}
			|u_\vep(x_1, t)-u_\vep(x_2, t)|\leq L|x_1-x_2|^\theta
		\end{equation}
		for all $x_1, x_2\in \ol{\Omega}$, $t\in [0, T)$ and $\vep\geq 0$ small. Then, \eqref{error-est parabolic} holds for all $\vep>0$ small with $\nu$ given as in \eqref{nu-exponent}. 
	\end{thm}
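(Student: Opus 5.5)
We follow the proof of Theorem~\ref{thm:parabolic}, but double only the space variables, as outlined in Remark~\ref{rmk holder}. As there, we prove the upper bound for $u_\vep-u_0$ and obtain the reverse bound by symmetry. Assume $\sup(u_\vep-u_0)>0$. Set $\Psi(x,t)=u_\vep(x,t)-u_0(x,t)-Mt-\frac{b}{T-t}$ and
\[
\Psi_\delta(x,y,t)=u_\vep(x,t)-u_0(y,t)-\frac{|x-y|^k}{\delta^k}-Mt-\frac{b}{T-t}
\]
on $\Oba\times\Oba\times[0,T)$. The penalty $b/(T-t)$ together with boundedness and continuity gives a maximizer $(x_\delta,y_\delta,t_\delta)$. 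From $\Psi_\delta(x_\delta,y_\delta,t_\delta)\geq\Psi(x_\delta,t_\delta)$ we get $|x_\delta-y_\delta|^k\delta^{-k}\le u_0(x_\delta,t_\delta)-u_0(y_\delta,t_\delta)\le L|x_\delta-y_\delta|^\theta$. This uses only the spatial H\"older bound \eqref{equi-holder-x} and gives $|x_\delta-y_\delta|\le C\delta^{k/(k-\theta)}$, the analogue of \eqref{lip-est1}.

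Next suppose $x_\delta,y_\delta\in\Omega$ and $t_\delta\in(0,T)$. We apply the parabolic Crandall--Ishii lemma \cite[Theorem~8.3]{CIL} to the function $u_\vep(x,t)-u_0(y,t)$. It produces $(\tau,\xi,X)\in\ol{P}^{2,+}u_\vep(x_\delta,t_\delta)$ and $(\sigma,\eta,Y)\in\ol{P}^{2,-}u_0(y_\delta,t_\delta)$ with $\xi=\eta$ as in \eqref{ishii-1}, $\tau-\sigma=M+b/(T-t_\delta)^2$, and the matrix inequality \eqref{ishii-2}--\eqref{ishii-z}. This lemma requires the degenerate ellipticity and continuity/boundedness conditions of \cite[Theorem~8.3]{CIL}, which constrain the $X,Y$ in the semijets. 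We verify them using the boundedness of $A_\vep$ and $A_0$ on $\R^n_0$.

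\textbf{Main obstacle.} We must show that the closure semijets can be tested in the $\F$-solution sense even when $\xi=0$. Because $A_\vep$ is bounded, its singularity at $\xi=0$ is at most a bounded discontinuity, so $\F$-solutions coincide with viscosity solutions for the lower/upper semicontinuous envelopes of the operator. The plan is to establish this first, adapting \cite[Proposition~2.28]{Gbook}. With it, for $\xi\neq0$ we use the sub- and supersolution inequalities at the semijet elements directly. For $\xi=0$ we have $x_\delta=y_\delta$. In that case we instead test the $\F$-definition directly with $\psi(x,t)=u_0(y_\delta,t)+|x-y_\delta|^k\delta^{-k}+\dots$; the time part is only semicontinuous, so here we again use the envelope formulation. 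We then obtain \eqref{sub on singular}/\eqref{super on singular}, and the estimate of Case~2 of the proof of Theorem~\ref{thm:parabolic} applies.

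With these inequalities in hand, Steps~3 and 4 of the proof of Theorem~\ref{thm:parabolic} carry over verbatim with $t_\delta=s_\delta$. The first-order terms are bounded by $c_H\vep^\gamma+C_1\delta^{k\theta/(k-\theta)}$. The second-order terms are bounded by multiplying \eqref{ishii-2} by $A_\vep(\xi)^{1/2}\zeta\oplus A_0(\xi)^{1/2}\zeta$ and using (C1) and the spatial bound on $|x_\delta-y_\delta|$, which gives $M+b/(T-t_\delta)^2\le M_{\vep,\delta}$. Choosing $M=M_{\vep,\delta}$ therefore forces $(x_\delta,t_\delta)$ or $(y_\delta,t_\delta)$ to lie on $\partial_p\Om_T$. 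Since the times agree, \eqref{equi-holder-x} alone gives $u_\vep(x_\delta,t_\delta)-u_0(y_\delta,t_\delta)\le\sup|g_\vep-g_0|+L|x_\delta-y_\delta|^\theta$, and no time H\"older continuity is needed. Finally we let $b\to0$ and optimize in $\delta$ exactly as at the end of the proof of Theorem~\ref{thm:parabolic}, which yields \eqref{error-est parabolic} with $\nu$ from \eqref{nu-exponent}.
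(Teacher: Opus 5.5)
Your proposal is correct and follows the paper's own route, which the paper only sketches in Remark~\ref{rmk holder}. That route doubles the space variables only, invokes the parabolic Crandall--Ishii lemma (available because $A_\vep$ is bounded, and $A_0$ as well, as the remark assumes), and reuses the semijet, first-order and boundary estimates with $t_\delta=s_\delta$. Your envelope-formulation treatment of $\xi=0$ fills in a detail the paper omits; it closes because $x_\delta=y_\delta$ forces $Z=0$ in \eqref{ishii-2}, so $X\le 0\le Y$ and the envelope terms have the right sign to give \eqref{sub on singular} and \eqref{super on singular}.
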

	
	\begin{rmk}\label{rmk:periodic}
		Our discussions on the Cauchy-Dirichlet problem in Theorem~\ref{thm:parabolic} and Theorem~\ref{thm:parabolic2} extend readily to the case of spatially periodic settings. 
		Instead of considering \eqref{general parabolic lim eq} in a bounded domain $\Omega$, we may obtain the same stability results as in Theorem~\ref{thm:parabolic} and Theorem~\ref{thm:parabolic2} for the whole space $\R^n$, provided that the initial value $g$ and $H$ are both periodic with respect to $x$. Note that the periodic setting essentially plays the same role as a bounded domain (without boundary) in our comparison arguments and therefore does not substantially affect the proofs. For clarity of presentation, here we choose not to repeat the statements for this case.
	\end{rmk}
	
	The proofs of our preceding parabolic results can be adapted to quasilinear elliptic equations of the same type with slightly altered assumptions. We include the dependence of $H$ on $u$ and assume a strict monotonicity on the $u$-dependence. 
	
	Let $\Omega$ be a bounded domain in $\R^n$. For $\vep\geq 0$, consider \eqref{general approx eq} with the boundary condition \eqref{approx bdry-cond}.
	Our assumptions on $A_\vep$ and $H_\vep$ in the elliptic case include (A1) and the following (A2') (in place of (A2)) and (A3).
	\begin{enumerate}
		\item[(A2')] $H_\vep: \Omega\times \R\times \R^n\to \R$ is continuous, and there exists $L_H>0$  such that
		\[
		\begin{aligned}
			|H_\vep(x_1, \rho, \xi)-H_\vep(x_2, \rho, \xi)|&\leq L_H(1+|\xi|)\left(|x_1-x_2|\right)\\
		\end{aligned}
		\]
		for all $x_1, x_2\in \Omega$, $\rho\in \R$, $\xi\in \R^n$, and all $\vep\geq 0$.
		\item[(A3)] There exists $\lambda>0$ such that $H_\vep: \Omega\times \R\times \R^n\to \R$ satisfies
		\begin{equation}\label{h-monotone-strict}
			H_\vep(x, \rho_1, \xi)-H_\vep(x, \rho_2, \xi) \geq \lambda (\rho_1-\rho_2)
		\end{equation}
		for all $x\in \Omega$, $\xi\in \R^n$, $\rho_1\geq \rho_2$, and all $\vep\geq 0$. 
	\end{enumerate}
	Moreover, we substitute the assumption (C2) with the following (C2'):
	\begin{enumerate}
		\item[(C2')] For any $R>0$, there exist $c_H>0$ and $\gamma>0$  such that 
		\[
		|H_\vep(x, r, \xi)-H_0(x, r, \xi)|\leq c_H \vep^\gamma 
		\] 
		holds for all $\vep>0$ small, $x\in \Omega, \xi\in \R^n$ and $r\in \R$ with $|r|<R$.
	\end{enumerate}
	
	
	Our quantitative stability result for the elliptic equation is as below. For the sake of completeness, we also include a proof, which to a large extent resembles that of Theorem~\ref{thm:parabolic}. 
	\newpage
	\begin{thm}[Quantitative stability for elliptic equations]\label{thm:elliptic}
		Let $\Omega$ be a bounded domain in $\R^n$. For $\vep\geq 0$, assume that $A_\vep: \R^n_0\to \S_+^n$ and $H_\vep: \Omega\times \R\times  \R^n\to \R$ are continuous functions satisfying (A1)(A2')(A3) and (C1)(C2'). Let $g_\vep\in C(\partial\Omega)$ and $u_\vep\in C(\Oba)$ be $\F$-solutions of \eqref{general approx eq}\eqref{approx bdry-cond}. Assume in addition that $u_\vep$ are equi-H\"older continuous in $\Oba$, that is, there exist $L>0$ and $0<\theta\leq 1$ such that  
		\[
		|u_\vep(x_1)-u_\vep(x_2)|\leq L|x_1-x_2|^\theta 
		\]
		for all $x_1, x_2\in \Oba$ and $\vep\geq 0$.
		Then, for all $\vep>0$ small,  
		\begin{equation}\label{error-est}
			\max_{\Oba} |u_\vep-u_0|\leq \max_{\partial \Omega} |g_\vep-g|+\frac{1}{\lambda}\left(C_0\vep^{\nu}+c_H \vep^\gamma \right)
		\end{equation}
		holds with $\nu$ given as in \eqref{nu-exponent}, where $C_0>0$ is a constant depending on 
		$k>2$,$L_A>0$, $L_H>0$, $\lambda>0$ in (A1)(A2')(A3) and $\alpha, \beta, c_A, \in \R$ given in (C1). The constants $\gamma, c_H>0$ are given as in (C2'). 
	\end{thm}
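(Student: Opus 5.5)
We follow the doubling-variable argument from the proof of Theorem~\ref{thm:parabolic}, with the monotonicity (A3) taking over the role of the penalization $Mt+b/(T-t)$. It suffices to show
\[
\max_{\Oba}(u_\vep-u_0)\leq \max_{\partial\Omega}|g_\vep-g_0|+\frac{1}{\lambda}\left(C_0\vep^{\nu}+c_H\vep^\gamma\right),
\]
since the reverse inequality follows by exchanging the roles of $u_\vep$ and $u_0$.

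\textbf{Doubling and localization.} For $\delta\in(0,1)$ we maximize
\[
\Psi_\delta(x,y)=u_\vep(x)-u_0(y)-\frac{|x-y|^k}{\delta^k}
\]
over $\Oba\times\Oba$, with $k>2$ as in (A1), and denote a maximizer by $(x_\delta,y_\delta)$. Comparing with $\Psi_\delta(x_\delta,x_\delta)$ and using the H\"older bound for $u_0$, we obtain as in \eqref{lip-est1}
\[
|x_\delta-y_\delta|\leq C\delta^{\frac{k}{k-\theta}}, \qquad |x_\delta-y_\delta|^k\delta^{-k}\leq C\delta^{\frac{k\theta}{k-\theta}}.
\]
If $x_\delta\in\partial\Omega$ or $y_\delta\in\partial\Omega$, the equi-H\"older continuity of $u_\vep$ (or of $u_0$), as in \eqref{bdry-max para}, gives
\[
\max_{\Oba}(u_\vep-u_0)\leq\Psi_\delta(x_\delta,y_\delta)\leq \max_{\partial\Omega}|g_\vep-g_0|+C\delta^{\frac{k\theta}{k-\theta}}.
\]

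\textbf{Interior case.} If both points lie in $\Omega$, we apply the elliptic Crandall--Ishii lemma \cite[Theorem~3.2]{CIL}. It produces $(\xi,X)\in\ol{J}^{2,+}u_\vep(x_\delta)$ and $(\xi,Y)\in\ol{J}^{2,-}u_0(y_\delta)$ with $\xi$ as in \eqref{ishii-1} and the matrix inequality \eqref{ishii-2}. Subtracting the sub- and supersolution inequalities gives
\[
H_\vep(x_\delta,u_\vep(x_\delta),\xi)-H_0(y_\delta,u_0(y_\delta),\xi)\leq \tr(A_\vep(\xi)X)-\tr(A_0(\xi)Y)
\]
when $\xi\neq0$; when $\xi=0$ the right-hand side is simply $0$. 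We may assume $u_\vep(x_\delta)\geq u_0(y_\delta)$, since otherwise $\max(u_\vep-u_0)\leq\Psi_\delta(x_\delta,y_\delta)\leq 0$ and there is nothing to prove. Under this assumption, (A3), then (A2'), then (C2') with $R=\sup_\vep\|u_\vep\|_\infty$ (finite by equi-H\"older continuity and bounded data) give
\[
\text{LHS}\geq \lambda\bigl(u_\vep(x_\delta)-u_0(y_\delta)\bigr)-c_H\vep^\gamma-L_H(1+|\xi|)|x_\delta-y_\delta|,
\]
and the last term is at most $C\delta^{\frac{k\theta}{k-\theta}}$ exactly as in Step~3 of the proof of Theorem~\ref{thm:parabolic}. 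The second-order difference is bounded by the term $C_2\vep^{2\alpha}\delta^{d}$ from Step~4, using (C1), the condition $\beta>\frac{2-k}{2(k-1)}$, and letting $\mu\to0$. Here $d=\min\{b,c\}<0$ is the same exponent as in that proof. Hence
\[
\max_{\Oba}(u_\vep-u_0)\leq u_\vep(x_\delta)-u_0(y_\delta)\leq \frac{1}{\lambda}\Bigl(c_H\vep^\gamma+C\delta^{\frac{k\theta}{k-\theta}}+C\vep^{2\alpha}\delta^{d}\Bigr).
\]

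\textbf{Optimization over $\delta$.} Combining the two cases, we choose $\delta=\delta_0\sim\vep^{2\alpha/(a-d)}$ with $a=\frac{k\theta}{k-\theta}$, as at the end of the proof of Theorem~\ref{thm:parabolic}. Both $\delta$-dependent terms are then $O(\vep^{2\alpha a/(a-d)})=O(\vep^\nu)$ with $\nu$ given by \eqref{nu-exponent}. The boundary-case contribution $C\delta_0^{a}$ is of the same order $\vep^\nu$, so it is absorbed into $\frac{1}{\lambda}C_0\vep^\nu$ after enlarging $C_0$ by a factor depending on $\lambda$.

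\textbf{Main obstacle.} The delicate point is the interior step: all the $u$-dependence of $H_\vep$ must be funneled through (A3) before (A2') and (C2') are used, which requires the sign condition $u_\vep(x_\delta)\geq u_0(y_\delta)$. The other point needing care is the $\xi=0$ case, where the singular operator is dropped. There the $\F$-solution definition is applied with test functions built from $|x-y|^k$; these are admissible by (A1), exactly as in the parabolic proof.
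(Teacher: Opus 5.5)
Your proposal is correct and is essentially the paper's proof. It uses the same doubling function $u_\vep(x)-u_0(y)-|x-y|^k/\delta^k$, the same H\"older bound on $|x_\delta-y_\delta|$, the same boundary/interior dichotomy, Crandall--Ishii with (A3) producing the factor $\lambda(u_\vep(x_\delta)-u_0(y_\delta))$, and the same choice $\delta_0\sim\vep^{2\alpha/(a-d)}$. Two of your steps are slightly more careful than the paper's write-up: reducing to $u_\vep(x_\delta)\geq u_0(y_\delta)$ before invoking the one-sided inequality (A3), and using the explicit test functions $\pm|x-y|^k/\delta^k$ when $\xi=0$. One small fix: apply (A3) and (A2') to $H_\vep$ and then (C2') at $\rho=u_0(y_\delta)$, so that $R>\max_{\Oba}|u_0|$ suffices and you need no uniform bound on the data $g_\vep$.
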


	\begin{proof}
		The proof is similar to the parabolic case, so we will skip some details and point out the differences. Suppose that there exists $\hat{x}\in \Oba$ such that 
		\[
		u_\vep(\hat{x}) -u_0(\hat{x})=\max_{\Oba} (u_\vep-u_0).
		\]
		For $\delta\in (0, 1)$ small, consider
		\[
		\Phi_\delta(x, y):= u_\vep(x)-u_0(y)-\frac{|x-y|^k}{\delta^k}, \quad x, y\in \Oba. 
		\] 
		It is clear that there exists a maximizer $(x_\delta, y_\delta)\in \Oba\times \Oba$ of $\Phi_\delta$, which yields
		\begin{equation}\label{max-est2}
			\Phi_\delta(x_\delta, y_\delta)=u_\vep(x_\delta)-u_0(y_\delta)-\frac{|x_\delta-y_\delta|^k}{\delta^k}\geq \Phi_\delta(\hat{x}, \hat{x})=\max_{\Oba} (u_\vep-u). 
		\end{equation}
		We thus get 
		\[
		\frac{|x_\delta-y_\delta|^k}{\delta^k}\leq u_\vep(x_\delta) -u(y_\delta)-u_\vep(\hat{x})+u(\hat{x}), 
		\]
		which, by the H\"older continuity of $u$ in $\Oba$ yields,  
		\[
		\frac{|x_\delta-y_\delta|^k}{\delta^k}\leq u_\vep(x_\delta)-u_0(x_\delta)-u_\vep(\hat{x})+u_0(\hat{x})+L|x_\delta-y_\delta|^\theta\leq L|x_\delta-y_\delta|^\theta
		\]
		and thus
		\begin{equation}\label{lip-est0}
			|x_\delta-y_\delta|\leq L^{\frac{1}{k-\theta}}\delta^{\frac{k}{k-\theta}}. 
		\end{equation}
		The following argument is essentially the same as in the elliptic case. We treat separately the cases where either of $x_\delta$ and $y_\delta$ lies on $\partial \Omega$ and where $x_\delta, y_\delta\in \Omega$.
		
		Let us first consider the case when either $x_\delta\in \partial \Omega$ or $y_\delta\in \partial \Omega$ holds. Using the equi-H\"older continuity of $u_\vep$,  we have 
		\begin{equation}\label{bdry-max}
			u_\vep(x_\delta)-u_0(y_\delta)\leq \max_{\partial \Omega} |u_\vep-u_0|+L|x_\delta-y_\delta|^\theta\leq \max_{\partial \Omega} |g_\vep-g|+  L^{\frac{k}{k-\theta}}\delta^{\frac{k\theta}{k-\theta}}. 
		\end{equation}

		
		Suppose now that $x_\delta, y_\delta\in \Omega$. We apply the Crandall-Ishii lemma \cite{CIL} to get, for every $\mu>0$ small, $(\xi, X)\in \ol{J}^{2, +}u_\vep(x_\delta)$ and $(\eta, Y)\in \ol{J}^{2, -}u(y_\delta)$ satisfying the same conditions as in \eqref{ishii-1} and \eqref{ishii-2}.

		By using (A2'), (A3), \eqref{ishii-1} and (C1)(C2'), we obtain 
		\begin{equation}\label{elliptic h-est1}
			\begin{aligned}
				& H_\vep(x_\delta, u_\vep(x_\delta), \xi)-H(y_\delta, u(y_\delta), \eta)\\
				&\geq -c_H \vep^\gamma -L_H|x_\delta-y_\delta|-\frac{L_H k}{\delta^k}|x_\delta-y_\delta|^k+\lambda (u_\vep(x_\delta)-u(y_\delta)).
			\end{aligned}
		\end{equation}
		for $\lambda>0$ given in (A3).
		By \eqref{lip-est0} and taking the smaller of the two exponents for $\delta$, we are led to                      
		\begin{equation}\label{h-est2b}
			H_\vep(x_\delta, u_\vep(x_\delta), \xi)-H(y_\delta, u(y_\delta), \eta) \geq -c_H \vep^\gamma -C_1 \delta^{\frac{k\theta}{k-\theta}} +\lambda(u_\vep(x_\delta)-u(y_\delta))
		\end{equation}
		for some $C_1>0$.
		
		Our estimates for the second order terms are similar to the parabolic case. 
		
		\noindent Case 1. $\xi=\eta\neq 0$. Multiplying both sides of \eqref{ishii-2} from left and right by $A_\vep(\xi)^{1/2}\zeta\oplus A(\xi)^{1/2}\zeta\in \R^{2n}$ and using (C1) yields 
		\[
		\begin{aligned}
			&\tr (A_\vep(\xi)X) -\tr (A(\xi)Y)\\
			&\leq c_A^2 k(k-2)n\delta^{-k}|x_\delta-y_\delta|^{k-2} \vep^{2\alpha}+ c_A^2 k^{1+2\beta}(k-2)n\delta^{-k(1+2\beta)} |x_\delta-y_\delta|^{k-2+2(k-1)\beta}\vep^{2\alpha} \\
			&\quad \quad +nC_\mu4c_A^2\eps^{2\alpha}(1+\abs{\xi}^{2\beta}),
		\end{aligned}
		\]
		where the condition $\beta> \frac{2-k}{2(k-1)}$ in (C1) immediately yields $k-2+2(k-1)\beta>0$. 
		
		Adopting \eqref{lip-est0} again, we are led to 
		\begin{align*}\label{a-est1b}
			&\tr (A_\vep(\xi)X) -\tr (A(\xi)Y)\\&\leq C_2\vep^{2\alpha}\left(\delta^{\frac{k(\theta-2)}{k-\theta}}+ \delta^{\frac{k(\theta-1)(1+2\beta)-k}{k-\theta}}\right)+nC_\mu4c_A^2\eps^{2\alpha}(1+\abs{\xi}^{2\beta})\numberthis
		\end{align*}
		for the same constant $C_2>0$ as in the parabolic case.
		Applying the definition of subsolutions and supersolutions, we get in this case
		\[
		-\tr (A_\vep(\xi)X)+H_\vep(x_\delta, u_\vep(x_\delta), \xi)\leq 0
		\]
		and
		\[
		-\tr (A(\xi)Y)+H(y_\delta, u(y_\delta), \xi)\geq 0.
		\]
		Taking the difference of these inequalities and adopting \eqref{h-est2b} and \eqref{a-est1b}, we are led to an estimate independent of the matrices $X$ and $Y$. Sending $\mu\to0$ yields 
		\[
		\lambda(u_\vep(x_\delta)-u(y_\delta)) \leq c_H \vep^\gamma +C_1 \delta^{\frac{k\theta}{k-\theta}}+C_2\vep^{2\alpha}\left(\delta^{\frac{k(\theta-2)}{k-\theta}}+ \delta^{\frac{k(\theta-1)(1+2\beta)-k}{k-\theta}}\right). 
		\]

		\noindent Case 2. $\xi=\eta=0$. The definition of $\F$-subsolutions and $\F$-supersolutions give us
		\[
		H_\vep(x_\delta, u_\vep(x_\delta), \xi)\leq 0\leq H(x_\delta, u(y_\delta), \xi),
		\]
		which by \eqref{h-est2b} implies that for all $\delta>0$ small,  
		\[
		\lambda(u_\vep(x_\delta)-u(y_\delta))\leq c_H\vep^\gamma +C_1 \delta^{\frac{k\theta}{k-\theta}}.
		\]

		Combining the estimates in both Case 1 and Case 2, we obtain 
		\[
		u_\vep(x_\delta)-u(y_\delta)\leq \frac{c_H}{\lambda} \vep^\gamma + \frac{C_1}{\lambda} \delta^{\frac{k\theta}{k-\theta}}+\frac{C_2}{\lambda} \vep^{2\alpha}\left( \delta^{\frac{k(\theta-2)}{k-\theta}}+\delta^{\frac{k(\theta-1)(1+2\beta)-k}{k-\theta}}\right)
		\]
		when $x_\delta, y_\delta\in \Omega$. By including the estimate \eqref{bdry-max} for the case when $x_\delta \in \partial \Omega$ or $y_\delta \in \partial \Omega$, we are led to 
		\[
		\begin{aligned}
			u_\vep(x_\delta)-u(y_\delta)\leq & \max_{\Oba}  |g_\vep-g|+\frac{c_H}{\lambda} \vep^\gamma \\
			& + \left(\frac{C_1}{\lambda}+L^{\frac{k}{k-\theta}} \right) \delta^{\frac{k\theta}{k-\theta}}+\frac{C_2}{\lambda} \vep^{2\alpha}\left(\delta^{\frac{k(\theta-2)}{k-\theta}}+ \delta^{\frac{k(\theta-1)(1+2\beta)-k}{k-\theta}}\right)
		\end{aligned}
		\]
		for all $\vep>0$ small and $\delta\in (0, 1)$. 
		Minimizing the right hand side over $\delta$, we obtain 
		\[
		u_\vep(x_\delta)-u(y_\delta)\leq \max_{\Oba} |g_\vep-g|+\frac{c_H}{\lambda} \vep^\gamma +  \frac{C_0}{\lambda} \vep^\nu
	\]
	for $\nu$ as in \eqref{nu-exponent} and for some $C_0>0$ depending on $C_1, C_2, L, k, \theta, \lambda$, 
	which completes the proof of \eqref{error-est1}. The estimate for $\max_{\Oba} (u-u_\vep)$ is proven similarly.
\end{proof}
\begin{rmk}
	The condition $\lambda>0$ in (A3) is needed for our method. It is not clear to us how to handle the case when $H$ does not depend on $u$. In particular, it would be interesting to establish an estimate of the difference between a $p$-harmonic function and a $q$-harmonic function for $p, q>1$ with common boundary data. 
\end{rmk} 

\section{Applications and examples}\label{sec:app}

We next discuss several typical examples to which we can apply Theorem~\ref{thm:parabolic} or Theorem~\ref{thm:parabolic2}.     
While it is also possible to obtain similar convergence rates for quasilinear elliptic equations of the same type by using Theorem~\ref{thm:elliptic}, we here focus only on parabolic equations and omit details for the elliptic case. 

\subsection{The normalized $p$-Laplace type equations}

Our first application is for 
\begin{equation}\label{np-parabolic}
	\partial_t u-\Delta^N_p u+H(x, u, \nabla u)=0 \quad \text{in $\Om_T:=\Omega\times (0, T)$}
\end{equation}  
with $1\leq p<\infty$ in a bounded domain $\Omega\subset \R^n$. As mentioned in the introduction, we are interested in the stability of the solution with respect to $p$. 

Fix $q\in (1, \infty)$. Let $\vep=|p-q|$ for $p\in (1, \infty)$. Suppose that $H_\vep$ does not depend on $\vep\geq 0$ small, that is,   
\[
H_\vep(x, \rho, \xi)=H_0(x, \rho, \xi)
\]
for all $x\in \Omega,\ \rho\in \R,\ \xi\in \R^n$. Let
\[
A_\vep(\xi)= I+(p-2) \frac{\xi\otimes \xi}{|\xi|^2}
\]
for all $\xi\in \R^n_0$ and $\vep\geq 0$. Since $A_\vep$ is bounded in $\R^n_0$, in this case we can apply Theorem~\ref{thm:parabolic2} to obtain the following result.

\begin{thm}\label{thm:p-lap1par}
	Let $\Omega\subset \R^n$ be a bounded domain and $T>0$. Assume that $H: \Omega\times (0,T)\times \R^n\to \R$ satisfies (A2).  Fix $q\in [1, \infty)$ and take $p\in [1, \infty)$ close to $q$. For any such $p$, let $u_p\in C(\Om_T)$ be a solution to \eqref{np-parabolic} satisfying \eqref{parabolic bdry-cond} with 
	with boundary data $g=g_p\in C(\partial_p \Om_T)$.  Assume in addition that there exist $L>0$ and $\theta\in (0, 1]$ such that
	\[
	|u_p(x_1, t)-u_p(x_2, t)|\leq L|x_1-x_2|^\theta 
	\]
	for all $x_1, x_2\in \overline{\Omega}$, $0\leq t<T$ and all $p$ near $q$. Then, there exists $C>0$ independent of $p$, such that
	\begin{equation}\label{rate:p-lap1par}
		\sup_{\overline{\Omega}\times[0,T)}|u_p-u_q|\leq \sup_{\partial_p \Om_T}|g_p-g_q| +C|p-q|^{\theta}. 
	\end{equation}
\end{thm}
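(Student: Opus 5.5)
Theorem~\ref{thm:p-lap1par} follows by applying Theorem~\ref{thm:parabolic2} to the family indexed by $\vep=|p-q|$. We set $A_\vep(\xi)=I+(p-2)\frac{\xi\otimes\xi}{|\xi|^2}$ and $A_0(\xi)=I+(q-2)\frac{\xi\otimes\xi}{|\xi|^2}$ on $\R^n_0$, and $H_\vep=H_0=H$. The equation is the nondivergence form of $\partial_t u-\Delta^N_p u+H=0$, since $\Delta^N_p u=\tr(A(\nabla u)\nabla^2u)$ with this $A$.

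The hypotheses check out as follows.
\begin{enumerate}
\item[(A2)] This is assumed on $H$.
\item[(C2)] It holds trivially with any $\gamma>0$ and $c_H=0$, so the term $c_HT\vep^\gamma$ vanishes.
\item[Boundedness] For $p$ near $q$ the eigenvalues of $A_\vep(\xi)$ are $1$ (multiplicity $n-1$, directions orthogonal to $\xi$) and $p-1$ (direction $\xi$). These are bounded and nonnegative, so $A_\vep\in\S^n_+$ and $A_\vep$ is bounded in $\R^n_0$, as Theorem~\ref{thm:parabolic2} requires.
\item[(A1)] Take any $k>2$, say $k=3$, and $f(x)=|x|^k$. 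Then $\nabla^2 f=k|x|^{k-2}\bigl(I+(k-2)\frac{x\otimes x}{|x|^2}\bigr)$, and $A_\vep$ is bounded, so $\tr(A_\vep(\nabla f)\nabla^2 f)=O(|x|^{k-2})\to0$.
\end{enumerate}

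The only computation needed is (C1). The matrices $A_\vep(\xi)$ and $A_0(\xi)$ commute and share the eigenbasis $\{\xi/|\xi|\}\cup\xi^\perp$. Hence
\[
A_\vep(\xi)^{1/2}-A_0(\xi)^{1/2}=\bigl(\sqrt{p-1}-\sqrt{q-1}\bigr)\frac{\xi\otimes\xi}{|\xi|^2},
\]
and so $\|A_\vep^{1/2}-A_0^{1/2}\|=|\sqrt{p-1}-\sqrt{q-1}|$.

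\textbf{Case $q>1$.} Here $|\sqrt{p-1}-\sqrt{q-1}|=\frac{|p-q|}{\sqrt{p-1}+\sqrt{q-1}}\le \frac{|p-q|}{\sqrt{q-1}}$. So (C1) holds with $\alpha=1$, $c_A=(q-1)^{-1/2}$, and we may take $\beta=0$. This is admissible because $0>\frac{2-k}{2(k-1)}$ for $k>2$, and the bound is uniform in $\xi$.

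Theorem~\ref{thm:parabolic2}, with only spatial equi-H\"older continuity assumed, then gives \eqref{error-est parabolic} with
\[
\nu=\frac{\alpha\theta}{1+(1-\theta)\max\{\beta,0\}}=\theta .
\]
This is exactly \eqref{rate:p-lap1par}, with $C$ depending on $q,T,n,k,L_H,L,\theta$ but not on $p$.

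\textbf{Case $q=1$ (main obstacle).} Here $p\ge1$ forces $\vep=p-1$, and the estimate above gives only $\|A_\vep^{1/2}-A_0^{1/2}\|=\sqrt{p-1}=\vep^{1/2}$. Read naively this yields the rate $\vep^{\theta/2}$, not the claimed $|p-q|^\theta$.

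The fix I would try first is the generalization of (C1) noted after its statement: replace $A_\vep^{1/2}$ by any factorization $\Sigma_\vep^T\Sigma_\vep=A_\vep$. Take $\Sigma_\vep=\bigl(P_\xi^\perp,\ \sqrt{p-1}\,\xi/|\xi|\bigr)$, where $P_\xi^\perp$ is projection onto $\xi^\perp$. Then
\[
\tr(A_\vep X)-\tr(A_0Y)=\tr\bigl(P^\perp(X-Y)P^\perp\bigr)+(p-1)\Bigl\langle X\frac{\xi}{|\xi|},\frac{\xi}{|\xi|}\Bigr\rangle .
\]
The first term is $\le0$ by \eqref{ishii-2}. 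This is because the matrix $Z$ in \eqref{ishii-z} only involves $(x_\delta-y_\delta)\otimes(x_\delta-y_\delta)$, and $x_\delta-y_\delta$ is parallel to $\xi$ by \eqref{ishii-1}, so $Z$ vanishes on $\xi^\perp\oplus\xi^\perp$.

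For the second term, \eqref{ishii-2} only gives an upper bound $X\le Z_{11}+O(\mu)$. In the direction $\xi/|\xi|$ this yields
\[
(p-1)\Bigl\langle X\frac{\xi}{|\xi|},\frac{\xi}{|\xi|}\Bigr\rangle\le \vep\,\frac{k(k-2)}{\delta^{k}}|x_\delta-y_\delta|^{k-2}+O(\mu).
\]
By \eqref{lip-est1} this is $O\bigl(\vep\,\delta^{k(\theta-2)/(k-\theta)}\bigr)$.

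This is the same structure as in the proof of Theorem~\ref{thm:parabolic} but with $\vep$ in place of $\vep^{2\alpha}$. Repeating the $\delta$-optimization of Step~5 then gives the rate $\vep^{\theta}$. If this bookkeeping works it covers $q=1$, and it is the one point where I would go beyond a direct citation of Theorem~\ref{thm:parabolic2}.
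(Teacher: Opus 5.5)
\textbf{The case $q>1$.} Here your argument is the paper's proof: commuting square roots, $|\sqrt{p-1}-\sqrt{q-1}|\le c|p-q|$, hence (C1) with $\alpha=1$, $\beta=0$, and then Theorem~\ref{thm:parabolic2} with $c_H=0$ and $\nu=\theta$.

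\textbf{The gap is $q=1$.} There your bookkeeping does not give $\vep^\theta$. Your bound
\[
(p-1)\la X\tfrac{\xi}{|\xi|},\tfrac{\xi}{|\xi|}\ra\le C\,\vep\,\delta^{\frac{k(\theta-2)}{k-\theta}}
\]
is the paper's second-order error with $\vep^{2\alpha}$ replaced by $\vep$. That corresponds to $\alpha=1/2$, not $\alpha=1$. Balancing it against $\delta^{\frac{k\theta}{k-\theta}}$ gives $\delta\sim\vep^{\frac{k-\theta}{2k}}$ and an error of order $\vep^{\theta/2}$, which is exactly the naive rate. The new factorization cannot help:
\begin{itemize}
\item With $A_\vep^{1/2}-A_0^{1/2}=\sqrt{p-1}\,\frac{\xi\otimes\xi}{|\xi|^2}$, the paper's own test vectors already produce precisely your bound.
\item For any factorization, $\Sigma_0\frac{\xi}{|\xi|}=0$ forces $\bigl|(\Sigma_\vep-\Sigma_0)\frac{\xi}{|\xi|}\bigr|=\bigl|\Sigma_\vep\frac{\xi}{|\xi|}\bigr|=\sqrt{p-1}$.
\end{itemize}
A minor point: the $P^\perp$ term is $\le 0$ because testing \eqref{ishii-2} with $(\zeta,\zeta)$ kills the right-hand side, so $X\le Y$. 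Your reason, that $Z$ vanishes on $\xi^\perp\oplus\xi^\perp$, relies on the displayed $Z$. That formula omits the $k\delta^{-k}|x_\delta-y_\delta|^{k-2}I$ blocks of the true Hessian of $|x-y|^k/\delta^k$.

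\textbf{The claimed rate is false at $q=1$.} So no bookkeeping can close this case. Take $n=1$, $\Omega=(-1,1)$, $H\equiv 0$, and $g_p=g$ with $g(x,0)=|x|$, $g(\pm1,t)=1$.
\begin{itemize}
\item In one dimension $A_0\equiv 1+(1-2)=0$, so $u_1(x,t)=|x|$.
\item For $p>1$ the equation is $\partial_t u=(p-1)\partial_{xx}u$.
\item All $u_p$ are $1$-Lipschitz in $x$, by $|x|\le u_p\le 1$ and translation comparison. So the hypothesis holds with $\theta=1$.
\end{itemize}
Let $w$ be the whole-line solution with datum $|x|$. Then $\partial_t w=2(p-1)G>0$, where $G$ is the heat kernel. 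Hence $w-(w(1,t)-1)$ is a subsolution with the same parabolic data, and
\[
u_p(0,t)\ge 2\sqrt{(p-1)t/\pi}-\bigl(w(1,t)-1\bigr),
\]
where the last term is exponentially small as $p\to1$. Therefore $\sup|u_p-u_1|\ge c\,|p-1|^{1/2}$, which is not $O(|p-1|)$.

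\textbf{The paper's proof does not cover $q=1$ either.} It asserts $|\sqrt{p-1}-\sqrt{q-1}|\le c|p-q|$ for $p,q\in(1,\infty)$, and this fails at $q=1$; your constant $c_A=(q-1)^{-1/2}$ blowing up signals the same thing. The correct conclusion has two parts:
\begin{itemize}
\item rate $|p-q|^\theta$ for $q>1$;
\item rate $|p-1|^{\theta/2}$ for $q=1$, from Theorem~\ref{thm:parabolic2} with $\alpha=1/2$, $\beta=0$. The example above shows this is sharp for $\theta=1$.
\end{itemize}
So your ``naive'' reading was the right answer for $q=1$.
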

\begin{proof}
	Note that for the normalized $p$-Laplacian,  (A1) holds with $k>0$ large for all $q$ near $p$. 
	Moreover, by a straightforward computation, we have
	\[
	A_\eps(\xi)^{1/2}=I+(\sqrt{p-1}-1) \frac{\xi\otimes \xi}{|\xi|^{2}}, \quad A_0(\xi)^{1/2}=I+(\sqrt{q-1}-1) \frac{\xi\otimes \xi}{|\xi|^{2}}
	\]
	with $p, q\in (1, \infty)$ satisfying $\vep=|p-q|$. 
	This yields the existence of $c>0$ such that
	\[
	\norm{A_\vep(\xi)^{1/2} -A_0(\xi)^{1/2}}\leq |\sqrt{p-1}-\sqrt{q-1}|\leq c|p-q|
	\]
	for all $q$ near $p$. This amounts to saying that (C1) holds with $\alpha=1$ and $\beta=0$.  We thus complete the proof of inequality \eqref{rate:p-lap1par} by applying Theorem~\ref{thm:parabolic2} with $c_H=0$ and $\nu=\theta$. 
\end{proof}

The result above implies the convergence rate $O(|p-q|)$ for spatially Lipschitz solutions $u_p\to u_q$ as $p\to q$ if they satisfy the same boundary value $g_p=g_q$ on $\partial \Omega_T$. A concrete example is as follows. 

\begin{example}
	Consider the normalized $p$-Laplace equation
	\begin{equation}\label{eq:np}
		\partial_t u=\Delta_p^N u \quad \text{in $\R^n\times (0, T)$}
	\end{equation}
	for $1\leq p<\infty$, $T>0$.  Assume that $u(x, t)$ is independent of the variables $x_2, \ldots, x_n$. Then the equation reduces to a one dimensional heat diffusion:
	\[
	\partial_t v- (p-1) \partial_{xx} v=0 \quad \text{in $\R\times (0, T)$.}
	\]
	Suppose that the initial value for this diffusion equation is given by 
	\[
	v(x, 0)=\sin x  \quad  x\in \R. 
	\]
	By periodicity, this initial value problem admits a unique solution, which is explicitly given by 
	\begin{equation}\label{eq:ex-np1}
		v(x, t)=e^{(1-p)t} \sin x, \quad x\in \R,\ t\geq 0. 
	\end{equation}
	It corresponds to the unique $\R^n$-periodic solution of the original Cauchy problem for \eqref{eq:np}. 
	
	One easily verifies that there exists $C>0$ such that the solution $v_p$ associated to the parameter $p\in [1, \infty)$ satisfies
	\[
	\sup_{\R^n\times [0, T)}|v_p-v_q|\leq C|p-q|
	\]
	for $p$ sufficiently close to any fixed $q\geq 1$. This estimate is consistent with our result in Theorem~\ref{thm:p-lap1par} for equi-Lipschitz solutions of \eqref{eq:np}, when adapted to the spatially periodic setting as discussed in Remark~\ref{rmk:periodic}. 
	
	This example also demonstrates the optimality of the convergence rate for Lipschitz solutions. However, if the initial data are not assumed to be Lipschitz, the rate provided by Theorem~\ref{thm:parabolic} may not be considered sharp. Indeed, when $p>1$ and the initial datum $v(\cdot,0)$ in this example is merely H\"older continuous in $\R$ with exponent $\theta\in(0,1)$, a direct application of Theorem~\ref{thm:p-lap1par} to spatially periodic solutions only yields a convergence rate 
	\[
	\sup_{\R\times [0, T)}|v_p- v_q|=O(|p-q|^\theta) \quad \text{as $p\to q$,}
	\]
	since Theorem~\ref{thm:p-lap1par} requires spatial equi-Hölder continuity of solutions on the entire time interval $[0,T)$. On the other hand, the regularizing effect of the heat equation actually implies that $v(\cdot,t)$ becomes Lipschitz continuous for $t>0$ arbitrarily small. As a result, if we use Theorem~\ref{thm:p-lap1par} on the truncated time interval $(\vep, T)$ for an arbitrarily small initial time $\vep>0$, then for each $t>0$, we recover the sharp rate 
	\[
	\sup_{\R}|v_p(\cdot, t)- v_q(\cdot, t)|=O(|p-q|).
	\]
	This highlights a limitation of our method when applied to general, possibly degenerate parabolic equations, since it does not take into account the regularizing effects of the evolution.  
\end{example}

\subsection{The variational $p$-Laplace type equations}
We now turn to the case of the variational $p$-Laplacian with $1<p<\infty$. Consider
\begin{equation}\label{vp-parabolic}
	\partial_t u-\Delta_p u+H(x, u, \nabla u)=0 \quad \text{in $\Om_T$,}
\end{equation}
for which we take 
\[
A_\eps(\xi)=|\xi|^{p-2}\left(I+(p-2) \frac{\xi\otimes \xi}{|\xi|^2}\right), \quad A_0(\xi)=|\xi|^{q-2}\left( I+(q-2) \frac{\xi\otimes \xi}{|\xi|^2}\right).
\]
We then have 
\[
\begin{aligned}
	&A_\eps(\xi)^{1/2}=|\xi|^{\frac{p}{2}-1}\left(I+(\sqrt{p-1}-1) \frac{\xi\otimes \xi}{|\xi|^{2}}\right),\\
	&A_0(\xi)^{1/2}=|\xi|^{\frac{q}{2}-1}\left(I+(\sqrt{q-1}-1) \frac{\xi\otimes \xi}{|\xi|^{2}}\right).     
\end{aligned}
\]
Below let us discuss several different cases in terms of the values of $q$. 

Suppose that $p<2$ and $q\leq 2$. Then for any $-1/2<\beta<q/2-1$, there exists $c>0$ such that  
\begin{equation}\label{A-est-vp}
	\norm{A_\vep(\xi)^{1/2} -A_0(\xi)^{1/2}}\leq c|p-q| (1+ |\xi|^{\beta})    
\end{equation}
for all $\xi\neq 0$ and $p\in (1, 2)$ close to $q$. 
By choosing $k>0$ large such that $\beta>\frac{2-k}{2k-1}$, we thus obtain (C1) with $\alpha=1$ and any $-1/2<\beta<p/2-1$. 

On the other hand, if $p>2$ and thus $q\geq 2$, then for any $\beta>q/2-1$, there exists $c>0$ such that \eqref{A-est-vp} holds for all $\xi\in \R^n$ and $p>2$ close to $q$. Thus, Theorem~\ref{thm:parabolic} implies the following.

\begin{thm}\label{thm:p-lap2par}
	Let $\Omega\subset \R^n$ be a bounded domain and $T>0$. Assume that $H: \Omega\times (0,T)\times \R^n\to \R$ satisfies (A2).   Fix $q\in (1, \infty)$ and take $p\in (1, \infty)$ close to $q$. For any such $p$, let $u_p\in C(\Om_T)$ be a solution to \eqref{vp-parabolic} satisfying \eqref{parabolic bdry-cond} with $g=g_p\in C(\partial_p \Om_T)$. 
	If for all $p$ close to $q$,  $u_p$ are equi-H\"older continuous in $\overline{\Om_T}$ with exponent $\theta\in (0, 1]$, then the following results hold:
	\begin{enumerate}
		\item If $p<2$ and $1<q\leq 2$, then for any $-1/2<\beta<p/2-1$, there exists $C>0$ independent of $p$ near $q$ such that
		\begin{equation}\label{rate:p-lap2par-new}
			\sup_{\overline{\Omega}\times[0,T)}|u_p-u_q|\leq \sup_{\partial_p\Omega_T}|g_p-g_q| +C|p-q|^{\theta}. 
		\end{equation}
		\item If $p>2$ and $q\geq 2$, then for any $\beta>q/2-1$, there exists $C>0$ independent of $q$ near $p$ such that 
		\begin{equation}\label{rate:p-lap2par}
			\sup_{\overline{\Omega}\times[0,T)}|u_p-u_q|\leq \sup_{\partial_p\Omega_T}|g_p-g_q| +C|p-q|^{\frac{\theta}{1+(1-\theta)\beta}}. 
		\end{equation}
	\end{enumerate}
\end{thm}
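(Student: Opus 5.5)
The plan is to apply Theorem~\ref{thm:parabolic} with $\vep=|p-q|$, with $A_\vep$ and $A_0$ the variational $p$- and $q$-Laplace coefficient matrices written above, and with $H_\vep=H_0=H$. Then (C2) holds trivially with $c_H=0$, and (A2) is assumed. The work is to verify (A1) and (C1) with $\alpha=1$ and the stated $\beta$, uniformly for $p$ near $q$; the rates then follow from the exponent formula \eqref{nu-exponent}.

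\emph{Step 1 (A1).} Take $f(x)=|x|^k$, so $\nabla f=k|x|^{k-2}x$ and $\nabla^2 f=k|x|^{k-2}\bigl(I+(k-2)\tfrac{x\otimes x}{|x|^2}\bigr)$. Then
\[
\tr\bigl(A_\vep(\nabla f)\nabla^2 f\bigr)=c(n,k,p)\,|x|^{(k-1)(p-2)+k-2}.
\]
This tends to $0$ as $x\to0$ as soon as $k>p/(p-1)$. For $p$ in a compact neighbourhood of $q>1$, one fixed large $k$ works uniformly. Choosing $k$ large also makes $\frac{2-k}{2(k-1)}$ as close to $-1/2$ as we like. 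Hence any prescribed $\beta>-1/2$ satisfies the constraint $\beta>\frac{2-k}{2(k-1)}$ in (C1).

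\emph{Step 2 (C1).} Write $A_\vep^{1/2}-A_0^{1/2}$ as
\[
\bigl(|\xi|^{p/2-1}-|\xi|^{q/2-1}\bigr)P_p+|\xi|^{q/2-1}\bigl(\sqrt{p-1}-\sqrt{q-1}\bigr)\tfrac{\xi\otimes\xi}{|\xi|^2},
\]
where $P_p$ is a bounded matrix. The second term is bounded by $c|p-q|\,|\xi|^{q/2-1}$. For the first term, the mean value theorem in the exponent gives
\[
\bigl||\xi|^{p/2-1}-|\xi|^{q/2-1}\bigr|\le \tfrac12|p-q|\,\bigl|\log|\xi|\bigr|\,|\xi|^{s/2-1}
\]
for some $s$ between $p$ and $q$.

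This is the delicate step: the logarithm must be absorbed by giving up a little in the power.
\begin{itemize}
\item \emph{Case $p,q\le 2$.} For $|\xi|\le1$ we have $|\xi|^{s/2-1}|\log|\xi||\le C|\xi|^{\beta}$ whenever $\beta<\min(p,q)/2-1$. We also need $\beta>-1/2$, which is why $\beta$ is restricted to $(-1/2,\,p/2-1)$ (with $p$ close to $q$, the endpoints essentially coincide). For $|\xi|\ge1$ the quantity is bounded, since $s/2-1\le0$ and $\log$ is controlled by a small positive power. So the bound $c|p-q|(1+|\xi|^\beta)$ holds.
\item \emph{Case $p,q\ge2$.} Near $0$ everything is bounded. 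At infinity, $|\xi|^{s/2-1}\log|\xi|\le C|\xi|^{\beta}$ for $\beta>\max(p,q)/2-1$; for $p$ close to $q$ this amounts to any fixed $\beta>q/2-1$.
\end{itemize}
In both cases (C1) holds with $\alpha=1$.

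\emph{Step 3 (conclusion).} Apply Theorem~\ref{thm:parabolic}. In case (1), $\beta<0$, so $\max\{\beta,0\}=0$ and $\nu=\theta$, giving \eqref{rate:p-lap2par-new}. In case (2), $\beta>0$ and $\nu=\theta/(1+(1-\theta)\beta)$, giving \eqref{rate:p-lap2par}.

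The constant $C$ depends on $k,\beta,c_A,L,\theta,T$ and the other data. It is uniform because $k$ and $c_A$ were chosen uniformly over a neighbourhood of $q$.
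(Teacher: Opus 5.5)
Your route is the paper's own. You set $H_\vep=H_0=H$, so $c_H=0$, verify (A1) with $|x|^k$ for one large $k$, verify (C1) with $\alpha=1$, and read $\nu$ off \eqref{nu-exponent}. The paper only asserts \eqref{A-est-vp}; your splitting plus the mean value theorem in the exponent, absorbing the logarithm into a slightly shifted power, proves it when $q\neq 2$. For $c_A$ to be uniform in $p$, you should fix a neighbourhood $[q-\delta,q+\delta]$ not containing $2$. You should also fix $\beta$ below $(q-\delta)/2-1$ in case (1) and above $(q+\delta)/2-1$ in case (2).

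\textbf{The gap is the endpoint $q=2$, which both parts of the statement include.}
\begin{itemize}
\item \emph{Why (C1) fails.} For $q=2$ one has $A_0^{1/2}=I$, while $\|A_\vep^{1/2}(\xi)\|\le\max\{1,\sqrt{p-1}\}\,|\xi|^{p/2-1}$. This bound tends to $0$ as $|\xi|\to\infty$ when $p<2$, and as $|\xi|\to0$ when $p>2$. So $\|A_\vep^{1/2}(\xi)-A_0^{1/2}(\xi)\|\to1$ at that end. There, $c_A|p-2|(1+|\xi|^\beta)\to c_A|p-2|$ for every admissible $\beta$ ($\beta<0$ in case (1), $\beta>0$ in case (2)). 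Hence (C1) with $\alpha=1$ fails for $p$ near $2$. In fact no single power $|\xi|^\beta$ works, since the opposite end forces the opposite sign of $\beta$.
\item \emph{Where your Step 2 hides it.} It sits in ``for $|\xi|\ge1$ the quantity is bounded'' and ``near $0$ everything is bounded''. The relevant bounds are $\sup_{r\ge1}r^{s/2-1}\log r=\frac{2}{e(2-s)}$ and $\sup_{r\le1}r^{s/2-1}|\log r|=\frac{2}{e(s-2)}$. Neither is uniform as $s\to2$, which is forced when $q=2$ and $p\to2$.
\item \emph{The paper has the same gap.} Its justification asserts \eqref{A-est-vp} for all $q\le2$ and all $q\ge2$, so this is not specific to you.
\end{itemize}

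\textbf{Repair.} Either exclude $q=2$, or at $q=2$ use
$\|A_\vep^{1/2}-A_0^{1/2}\|\le c|p-2|\,(|\xi|^{-\eta}+|\xi|^{\eta'})$ for small $\eta,\eta'>0$ and $p$ near $2$. You would then rerun the proof of Theorem~\ref{thm:parabolic} with two powers, rather than invoking it as stated. This recovers the rate claimed in case (2). In case (1) with $\theta<1$ it only gives the exponents $\theta/(1+(1-\theta)\eta')<\theta$ (tracking $\log|\xi|$ directly gives $\theta$ up to a logarithmic factor). So the claimed $|p-q|^\theta$ at $q=2$ is not established by this method.
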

As explained in Remark~\ref{rmk holder}, for the case (2), one may relax the regularity of $u_\vep$, assuming that they are equi-H\"older continuous only in $x$. 

As an immediate consequence of Theorem \ref{thm:p-lap2par}, for equi-Lipschitz solutions $u_p$ of \eqref{vp-parabolic}, the rate for the uniform convergence of $u_p\to u_q$ is $O(|p-q|)$ if $u_p$ and $u_q$ have the same boundary value $g_p=g_q$ on $\partial_p \Omega_T$. Below let us include an example about the Barenblatt solutions that reflects this convergence rate for the parabolic variational $p$-Laplace equation. 
\begin{example}
	Let $p>\frac{2n}{n+1}$, $p\not=2$ and consider the parabolic $p$-Laplace equation 
	\begin{equation}\label{vp-parabolic2}
		\partial_t u-\Delta_p u=0 
	\end{equation}
	in $\R^n\times (0, \infty)$. We examine the Barenblatt solution
	\begin{equation}
		\label{eq:Barenblatt}
		\mathcal{B}_p(x,t)=t^{-\frac{n}{\lambda_p}}\left(A+\gamma_p\left(\abs{x}t^{-\frac{1}{\lambda_p}}\right)^{\frac{p}{p-1}}\right)_+^{\frac{p-1}{p-2}}
	\end{equation}
	where $A>0$ is arbitrary, $\lambda_p=n(p-2)+p$, and
	\begin{equation*}
		\gamma_p=\frac{2-p}{p}\lambda_p^{\frac{1}{1-p}}>0. 
	\end{equation*}
	The Barenblatt solutions were initially constructed as fundamental solutions for the porous medium equation in \cite{Barenblatt1952}; see the book \cite{Barenblatt1996} for details. These were modified to construct fundamental solutions for \eqref{vp-parabolic2} in \cite{Kamin1988}, where they also prove their uniqueness. Barenblatt-type solutions play a key role in many proofs using comparison principle. These papers examine weak solutions of \eqref{vp-parabolic2} but these are equivalent to viscosity solutions for all $1<p<\infty$ as shown in \cite{Juutinen2001}, see also \cite{Siltakoski2021}. 
	

	In the so-called supercritical fast-diffusion regime, where  ${\frac{2n}{n+1}<p<2}$, we have $\gamma_p>0$, while in the degenerate (slow diffusion) case, where $p>2$, one has $\gamma_p<0$. In either case, it is known that $\mathcal{B}_p$ is a viscosity solution of \eqref{vp-parabolic2} in $\R^n\times (0, \infty)$.   
	
	By direct computations, at any $(x, t)\in \R^n\times (0, \infty)$ such that 
	\[
	h(x, t, p)=A+\gamma_p\left(\abs{x}t^{-\frac{1}{\lambda_p}}\right)^{\frac{p}{p-1}}>0, 
	\]
	we have 
	\begin{align*}
		\frac{\partial}{\partial p} \mathcal{B}_p(x,t)=\mathcal{B}_p(x,t) &\left(\frac{n(n+1)}{\lambda_p^2} \log t -\frac{1}{(p-2)^2}\log h(x,t,p)\right.\\&\quad\quad +\left.\left(\frac{p-1}{p-2}\right)h(x, t, p)^{-1}\frac{\partial}{\partial p}h(x, t, p)\right).	
	\end{align*}
	Furthermore
	\begin{align*}
		\frac{\partial}{\partial p}h(x, t, p)&=\gamma_p\left(\abs{x}t^{-\frac{1}{\lambda_p}}\right)^{\frac{p}{p-1}} \frac{\partial}{\partial p}\left(\log\gamma_p+\frac{p}{(p-1)}\left[\log|x|-\frac{1}{\lambda_p}\log t\right]\right)\\
		&=\gamma_p\left(\abs{x}t^{-\frac{1}{\lambda_p}}\right)^{\frac{p}{p-1}} \left(\gamma_p^{-1}\frac{\partial}{\partial p}\gamma_p-\frac{1}{(p-1)}\log|x|+\frac{(n+1)p^2-2n}{(p-1)^2\lambda_p^2}\log t\right),
	\end{align*}
	where
	\begin{equation*}
		\gamma_p^{-1}\frac{\partial}{\partial p}\gamma_p=-\frac{2}{p(2-p)}+\frac{1}{(1-p)^2}\log \lambda_p+\frac{n+1}{(1-p)\lambda_p}.
	\end{equation*}
	Fix $q>1$ and $\delta>0$ such that $[q-\delta,q+\delta]$ is contained entirely in one of the regimes described above. If $\frac{2n}{n+1}<q<2$, one observes from the calculations above that for any $p\in [q-\delta, q+\delta]$ and for any compactly contained open subset $\O \Subset \R^n\times (0, \infty)$, there exists $C>0$ such that 
	\[
	\left|\frac{\partial}{\partial p} \mathcal{B}_p\right|\leq C \quad \text{a.e. in $\O$.} 
	\]
	The upper bound blows up on either end of the regime. In the degenerate case $q>2$, we need to be careful to avoid the free boundary and choose $\O\Subset \R^n\times (0, \infty)$ so that $h(x,t,p)>\delta$ for all $(x,t)\in\O$ and $p\in[q-\delta,q+\delta]$ to get a similar bound. In particular in either regime, by choosing $\Omega_T:=B_\rho(0)\times (\tau, T)$ for suitable $\rho>0$ and $0<\tau<T$, we obtain $C>0$ such that
	\[
	\sup_{\Omega_T}\abs{\mathcal{B}_p-\mathcal{B}_q}
	\leq C\abs{p-q}
	\]
	holds for all $p>1$ sufficiently close to $q$. 
	
	For each $p$, we can view $\mathcal{B}_p$ as the unique solution of \eqref{vp-parabolic2} satisfying the parabolic boundary data $g_p=\mathcal{B}_p$ on $(B_\rho(0)\times \{\tau\})\cup (\partial B_\rho(0)\times [\tau, T))$. Note that $\mathcal{B}_p$ are equi-Lipschitz in $\Omega_T=B_\rho(0)\times (\tau, T)$ for all $p$ near $q$. We thus see that Theorem \ref{thm:p-lap2par} (with $\theta=1$) gives the optimal convergence rate in this case.
\end{example}

One can extend the results in Theorem~\ref{thm:p-lap2par} to the case of the elliptic $p$-Laplacian, following Theorem~\ref{thm:elliptic}. Below we provide a simple example. 

\begin{example}
	Let $p>1$ and $\lambda_p>0$ be a constant given by 
	\[
	\lambda_p=(n+1)\left(\frac{p+1}{p-1}\right)^{p-1}. 
	\]
	Consider the following quasilinear elliptic equation 
	\begin{equation}\label{eq:elliptic-ex1}
		\lambda_p u^{\frac{p-1}{p+1}} -\Delta_p u =0  \quad \text{in $B_1(0)$,}
	\end{equation}
	with the Dirichlet boundary condition 
	\begin{equation}\label{eq:elliptic-ex2}
		u =1 \quad \text{on $\partial B_1(0)$.}
	\end{equation}
	One can verify that, for each $p>1$, 
	\begin{equation}\label{eq:elliptic-ex3}
		u_p(x)=|x|^{\frac{p+1}{p-1}}, \quad x\in \overline{B_1(0)}
	\end{equation}
	is a solution of the Dirichlet problem \eqref{eq:elliptic-ex1}\eqref{eq:elliptic-ex2}. In fact, for 
	\[
	u(x,t)=C\abs{x}^{r} \quad r>0, C\in \R,
	\]
	by direct computations, we have, at any $x\in B_1(0)\setminus \{0\}$,
	\[
	\nabla u(x)=Cr |x|^{r-2} x, \quad 
	\nabla^2 u(x)= Cr |x|^{r-2} \left(I+(r-2)\frac{x\otimes x}{|x|^2}\right)
	\]
	which yields
	\begin{equation}\label{eq:plapformula}
		\begin{aligned}
			\Delta_p u(x)& = |\nabla u(x)|^{p-2} \tr\left[\left(I +(p-2) \frac{\nabla u\otimes \nabla u}{|\nabla u|^2}\right) \nabla^2 u(x)\right] \\
			&=C\abs{C}^{p-2} r^{p-1} |x|^{(r-1)(p-1)-1} \tr\left[ \left(I +(p-2) \frac{x\otimes x}{|x|^2}  \right) \left(I+(r-2) \frac{x\otimes x}{|x|^2} \right)\right]\\
			& =C\abs{C}^{p-2} r^{p-1} \left(n-1+(p-1)(r-1)\right) |x|^{(r-1)(p-1)-1}. 
		\end{aligned}
	\end{equation}
	Here, $u_p$ corresponds to the choice $C=1$ and $(p-1)(r-1)-1=1$, and thus
	\begin{equation*}
		\Delta_p u_p(x)=\left(\frac{p+1}{p-1}\right)^{p-1}(n+1)\abs{x}={\lambda_p}\abs{x},
	\end{equation*}
	and it follows immediately that 
	\[
	-\Delta_p u_p(x)=-\lambda_p|x|= -\lambda_p u_p(x)^{\frac{p-1}{p+1}}
	\]
	for all $x\in B_1(0)\setminus \{0\}$. On the other hand, at $x=0$, $u_p$ satisfies the equation in the sense of $\mathcal{F}$-solutions. It is clear that $u_p$ also satisfies the boundary condition \eqref{eq:elliptic-ex2} for all $p>1$. We thus have verified that $u_p$ given by \eqref{eq:elliptic-ex3} is a solution of the Dirichlet problem. Since the elliptic operator in \eqref{eq:elliptic-ex1} also fulfills the assumptions (A1)(A2')(A3), we can actually use Theorem~\ref{thm:elliptic} to show that $u_p$ is the only Lipschitz solution of the Dirichlet problem for each $p>1$. 
	
	Regarding the convergence rate for $u_p\to u_q$ as $p\to q$ for any fixed $q>1$, it is not difficult to see from \eqref{eq:elliptic-ex3} that there exists $C>0$ independent of $p>1$ near $q$ such that
	\[
	\max_{\overline{\Omega}}|u_p-u_q|\leq C|p-q|.
	\]
	This estimate is consistent with our general quantitative stability result in Theorem~\ref{thm:elliptic} for equi-Lipschitz solutions. Indeed, we have $|\lambda_p-\lambda_q|=O(|p-q|)$ for $p$ near $q>1$, which verifies (C2') with $\gamma=1$. The assumption (C1) can be verified using our previous estimates for the parabolic case. 
\end{example}

Based on the computations in \eqref{eq:plapformula}, 
we can also show that 
\[
v_p(x)=-\frac{p-1}{p}\left(\frac{c}{n}\right)^{\frac{1}{p-1}}|x|^{\frac{p}{p-1}}, \quad x\in \overline{B_1(0)}, 
\]
solves 
\[
-\Delta_p v_p =c  \quad \text{in $B_1(0)$}
\]
for any $c>0$, with constant boundary values on $\partial B_1(0)$. Our result in Theorem~\ref{thm:elliptic} does not apply to this example, since the lower order term $H_\vep$ in this equation is constant and therefore does not satisfy (A3). However, the example suggests that the uniform convergence $v_p\to v_q$ in $\Omega$ as $p\to q$ for any fixed $q>1$ still occurs at the rate $O(|p-q|)$. 

Another important example that cannot be treated by Theorem~\ref{thm:elliptic} is related to fundamental solutions of $p$-Laplace equation. Let $\Omega$ be a punctured ball $B_1(0)\setminus \{0\}$ in $\R^n$. Using the formula we proved above \eqref{eq:plapformula}, for each $p>n$,  
\[
w_p(x)=|x|^{\frac{p-n}{p-1}} 
\]
is $p$-harmonic in $\Omega$ satisfying the boundary condition $w_p(0)=0$ and $w_p=1$ on $\partial B_1(0)$. It actually solves the normalized $p$-Laplace equation as well. When $p>n>1$, this function is known as the fundamental solution to the $p$-Laplace equation \cite{Linotes}. Such functions are also called Hölder cones and were used in \cite{Bun} as comparison functions in the proof of their quantitative stability result.

For any fixed $q>n$, the explicit form implies that as $p\to q$, $w_p\to w_q$ uniformly with the rate $O(|p-q|)$. However, Theorem~\ref{thm:elliptic} does not apply to this equation, since (A3) fails to hold again. It would be an interesting future direction to develop a PDE-based method that yields convergence rates for a more general class of quasilinear elliptic problems, including this example and the one discussed above.

\subsection{General $p$-Laplace type equations}
Next we look into
\begin{equation}
	\label{eq:pq-parabolic}
	\partial_t u-\abs{\nabla u}^{p'-p}\div\left(\abs{\nabla u}^{p-2}\nabla u\right)=0
\end{equation}
for $p'>1$ and $p>1$, which represents a broad class of non-divergence type equations that generalize both the variational $p$-Laplacian and the normalized $p$-Laplacian. This equation was given as an example when Ohnuma and Sato \cite{OS}  introduced a notion of viscosity solutions to singular parabolic equations. 

We are interested in the stability of the solution with respect to the exponent pair $(p, p')$.

Let us fix $q, q'\in (1, \infty)$ and take
\begin{equation*}
	A_{\vep}(\xi)=\abs{\xi}^{p'-2}\left(I+(p-2)\frac{\xi\otimes \xi}{\abs{\xi}^2}\right)
\end{equation*}
\begin{equation}\label{general p-lap}
	A_0(\xi)=\abs{\xi}^{q'-2}\left(I+(q-2)\frac{\xi\otimes \xi}{\abs{\xi}^2}\right), 
\end{equation}
for $(p, p')$ close to $(q, q')$, where we set $\vep=\max\{|p-q|, |p'-q'|\}$. By direct computations, we get
\begin{align*}
	&A_\vep(\xi)^{1/2}=|\xi|^{\frac{p'}{2}-1}\left(I+(\sqrt{p-1}-1) \frac{\xi\otimes \xi}{|\xi|^{2}}\right)
	&A_0(\xi)^{1/2}=|\xi|^{\frac{q'}{2}-1}\left(I+(\sqrt{q-1}-1) \frac{\xi\otimes \xi}{|\xi|^{2}}\right),\\
\end{align*}
for $\xi\in \R^n_0$. 
It follows that
\[
\abs{A_\vep(\xi)^{1/2} -A_0(\xi)^{1/2}}\leq
\abs{\xi}^{\frac{q'}{2}-1}\abs{\sqrt{p-1}-\sqrt{q-1}}+ (p+1)\left||\xi|^{\frac{p'}{2}-1}-|\xi|^{\frac{q'}{2}-1}\right|,
\]
and thus 
there exists $c_A>0$ such that 
\begin{equation}\label{A-est-nondivp}
	\abs{A_\vep(\xi)^{1/2} -A_0(\xi)^{1/2}}
	\leq
	c_A\abs{p-q}\left(1+\abs{\xi}^{\frac{q'}{2}-1}\right)+c_A\abs{p'-q'}\left(1+|\xi|^{\beta}\right)
\end{equation}
for all $\xi\in\R^n_0$ and $\beta$ satisfying
\begin{equation}\label{beta-choice}
	\begin{aligned}
		-\frac{1}{2}<\beta< \frac{q'}{2}-1 &\quad \text{if $q'\leq 2$, $p'<2$,}\\
		\beta> \frac{q'}{2}-1 &\quad \text{if $q'\geq 2$, $p'>2$.}
	\end{aligned}
\end{equation}
This yields condition (C1) with $\alpha=1$ and $\beta$ given by \eqref{beta-choice}. 

Note that when $p'=q'$, \eqref{A-est-nondivp} reduces to
\[
\abs{A_\vep(\xi)^{1/2} -A_0(\xi)^{1/2}}\leq  c_A\abs{p-q}\left(1+\abs{\xi}^{\frac{q'}{2}-1}\right),
\]
which corresponds to (C1) with 
\[
\alpha=1, \quad \beta=\frac{q'}{2}-1.
\]

For this class of parabolic equations, existence and uniqueness of $\mathcal{F}$-solutions to the Cauchy problem for \eqref{eq:pq-parabolic} are available, since we can choose $|x|^k$ with $k>p'/(p'-1)$ as the base of our test class for the definition of $\mathcal{F}$-solutions. We refer to \cite{OS} for uniqueness and to \cite{Gbook} for existence; alternatively, both results fall within the general framework developed in \cite{De11}, where existence and uniqueness are established for a broader class of equations. Consequently, for such choices of $k$, Theorem~\ref{thm:parabolic} applies and leads to the following result.

\begin{thm}\label{thm:pq-lap1par}
	Let $\Omega\subset \R^n$ be a bounded domain and $T>0$. Fix $(q, q')\in (1, \infty)^2$ and take $(p, p')$ close to $(q, q')$. For any such $(p, p')$, let $u_{p,p'}\in C(\Om_T)$ be a solution to \eqref{eq:pq-parabolic} satisfying \eqref{parabolic bdry-cond} with $g=g_{p,p'}\in C(\partial_p \Om_T)$. 
	Assume in addition that for all such $(p, p')$, $u_{p,p'}$ are equi-H\"older continuous in $\overline{\Om_T}$ with exponent $\theta\in (0, 1]$. Let $\beta>-1/2$ be an arbitrary value satisfying \eqref{beta-choice}. Then the following results hold:
	\begin{enumerate}
		\item For $p'> 2$ and $q'\geq 2$, there exists $C>0$ independent of $(p, p')$ such that
		\begin{equation}\label{rate:pq-lap1par}
			\sup_{\ol{\Omega}\times[0,T)}|u_{p,p'}-u_{q,q'}|\leq \sup_{\partial_p\Omega_T}|g_{p,p'}-g_{q,q'}| +C(|p-q|+|p'-q'|)^{\theta}. 
		\end{equation}
		\item For $1<p'< 2$ and $1<q'\leq 2$, there exists $C>0$ independent of $(p, p')$ such that
		\begin{equation}\label{rate:pq-lap1par2}
			\sup_{\ol{\Omega}\times[0,T)}|u_{p,p'}-u_{q,q'}|\leq \sup_{\partial_p\Omega_T}|g_{p,p'}-g_{q,q'}| +C(|p-q|+|p'-q'|)^{\frac{\theta}{1+(1-\theta)\beta}}. 
		\end{equation}
		\item In the special case that $p'= q'\in (1, \infty)$, there exists $C>0$ independent of $q$ such that 
		\begin{equation}\label{rate:pq-lap2par}
			\sup_{\ol{\Omega}\times[0,T)}|u_{p,p'}-u_{q,p'}|\leq \sup_{\partial_p\Omega_T}|g_{p,p'}-g_{q,p'}| +C|p-q|^{\nu}, 
		\end{equation}
		holds with 
		\begin{equation}\label{nu-pq}
			\nu=\frac{2\theta}{2\theta+(1-\theta){q'}}. 
		\end{equation}
	\end{enumerate}
\end{thm}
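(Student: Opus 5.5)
The plan is to redo Step 4 of the proof of Theorem~\ref{thm:parabolic} with a sharper, pure-power form of (C1) instead of applying the theorem as a black box. A direct application does not suffice. With (C1) in the form $c_A\vep^\alpha(1+|\xi|^\beta)$, the constant $1$ produces the term $\delta^{k(\theta-2)/(k-\theta)}$. This term caps the exponent at $\theta$ when $\beta<0$, and it is exactly what \eqref{nu-exponent} records through $\max\{\beta,0\}$. So I will first replace \eqref{A-est-nondivp} by a bound without the constant term:
\[
\norm{A_\vep(\xi)^{1/2}-A_0(\xi)^{1/2}}\le c\,\vep\sum_{i}|\xi|^{\beta_i},\qquad \xi\in\R^n_0 .
\]
The first piece $|\sqrt{p-1}-\sqrt{q-1}|\,|\xi|^{q'/2-1}$ is already a pure power with $\beta_1=q'/2-1$. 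For the second piece, the mean value theorem in the exponent gives
\[
\big||\xi|^{p'/2-1}-|\xi|^{q'/2-1}\big|\le C|p'-q'|\big(|\xi|^{\beta_-}+|\xi|^{\beta_+}\big)
\]
for any $\beta_-<q'/2-1<\beta_+$ and $(p,p')$ close to $(q,q')$. When $p'=q'$ this second piece vanishes.

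Next I rerun Steps 1--3 verbatim, with the same $k>p'/(p'-1)$ from (A1) and the same Hölder bound \eqref{lip-est1}. In Step 4, multiplying \eqref{ishii-2} by $A_\vep(\xi)^{1/2}\zeta\oplus A_0(\xi)^{1/2}\zeta$, each power $\beta_i$ contributes a term of order $\vep^2\delta^{c_i}$, where
\[
c_i=\frac{k(\theta-1)(1+2\beta_i)-k}{k-\theta}.
\]
Crucially, no $\delta^{b}$ term appears. The requirement $k-2+2(k-1)\beta_i>0$ holds for $k$ large, since every $\beta_i>-1/2$. The optimization in Step 5 then balances $\delta^a$ against $\vep^2\delta^{c_i}$, giving the exponent $2a/(a-c_i)=\theta/(1+(1-\theta)\beta_i)$ for each $i$. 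The final rate is governed by the least favourable $i$.

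\emph{Item (3).} Here $p'=q'$, so only $\beta_1=q'/2-1$ occurs. This gives
\[
\nu=\frac{\theta}{1+(1-\theta)(q'/2-1)}=\frac{2\theta}{2\theta+(1-\theta)q'},
\]
which is \eqref{nu-pq}, for every $q'\in(1,\infty)$.

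\emph{Item (2).} The exponents are $q'/2-1$ and $\beta_\pm$. I will choose $\beta_\pm$ according to \eqref{beta-choice} and collect the resulting power of $|p-q|+|p'-q'|$. The delicate point is the comparison between the $|p-q|$ term, which carries $|\xi|^{q'/2-1}$, and the prescribed $\beta$. If necessary I will apply Young's inequality in $|\xi|$ on the set where $|\xi|$ is comparable to $\delta^{-k\theta/(k-\theta)}$, using $|\xi|\le C\delta^{-k\theta/(k-\theta)}$ from \eqref{ishii-1} and \eqref{lip-est1}.

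\emph{Item (1).} This is the main obstacle. All exponents are nonnegative, and a naive count gives only $\theta/(1+(1-\theta)\beta)$. To reach $\theta$, I must prevent the factor $|\xi|^{2\beta}\le C\delta^{-2k\theta\beta/(k-\theta)}$ from degrading the balance. My first attempt will be to split the interior alternative according to whether $|\xi|\le R$ or $|\xi|>R$, with $R$ fixed. On $\{|\xi|\le R\}$ the bound reduces to the case $\beta=0$ and yields $\theta$. On $\{|\xi|>R\}$, where $p'>2$ makes $A_\vep$ nondegenerate, I will try to exploit the extra term $\frac{b}{(T-t)^2}$ together with a penalization in $M$ to absorb the large-gradient contribution. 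I expect this absorption to be the hardest step, and this is where the argument must go beyond Theorem~\ref{thm:parabolic}.
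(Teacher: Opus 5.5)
Your treatment of item (3) is correct, and it is sharper than the paper's argument. The paper checks (C1) from \eqref{A-est-nondivp}, with $\alpha=1$ and $\beta=q'/2-1$ when $p'=q'$, and then applies Theorem~\ref{thm:parabolic} as a black box. Because \eqref{nu-exponent} contains $\max\{\beta,0\}$, that route gives only $\nu=\theta$ in item (3) when $q'<2$, which is smaller than the claimed $2\theta/(2\theta+(1-\theta)q')$. You observe that for $p'=q'$ one has exactly $\norm{A_\vep(\xi)^{1/2}-A_0(\xi)^{1/2}}=|\sqrt{p-1}-\sqrt{q-1}|\,|\xi|^{q'/2-1}$. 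Hence the term $\delta^{k(\theta-2)/(k-\theta)}$ never appears, and balancing $\delta^{a}$ against $\vep^2\delta^{c}$ gives $\theta/(1+(1-\theta)(q'/2-1))$ for every $q'>1$. The condition $k-2+2(k-1)(q'/2-1)>0$ is the same as $k>q'/(q'-1)$ from (A1).

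Items (1) and (2), however, are not proved, and the mechanisms you sketch cannot work. First note what the paper's argument actually delivers. For $p'>2$ one has $\beta>q'/2-1\ge 0$, so Theorem~\ref{thm:parabolic} gives the exponent $\theta/(1+(1-\theta)\beta)$. For $p'<2$ one has $\beta<0$, so it gives $\theta$. This is the reverse of the printed items (1) and (2); compare Theorem~\ref{thm:p-lap2par} and the summary in the introduction. So you are chasing rates the paper does not obtain either.

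In (1), the bad contribution is $\vep^2\delta^{-k}|x_\delta-y_\delta|^{k-2}|\xi|^{2\beta}$, where $|\xi|$ can be as large as $C\delta^{-k(1-\theta)/(k-\theta)}$ (not $\delta^{-k\theta/(k-\theta)}$ as you wrote). You cannot decide whether $|\xi|\le R$ or $|\xi|>R$ occurs at the maximizer, so the estimate must cover the worst case. Neither of your proposed absorbers helps:
\begin{itemize}
\item The term $b/(T-t)^2$ is sent to zero and only keeps the maximum away from $t=T$. Keeping $b$ large enough to absorb the error just reinserts a term of the same size through $b/(T-t)$ in the final bound.
\item $M$ is already set equal to $M_{\vep,\delta}$, and that is exactly how this term enters the final estimate.
\end{itemize}
Nondegeneracy of $A_\vep$ at large gradients plays no role, because the comparison argument sees the operators only through $X\oplus(-Y)\le Z+\mu Z^2$. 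As the paper remarks after Theorem~\ref{thm:p-lap1par}, the method does not capture regularizing effects.

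In (2), the $|p-q|$ piece is exactly $|\sqrt{p-1}-\sqrt{q-1}|\,|\xi|^{q'/2-1}$, and the $|p'-q'|$ piece needs $|\xi|^{\beta_+}$ with $\beta_+>q'/2-1$. For large $|\xi|$ neither is dominated by $|\xi|^{\beta}$ with $\beta<q'/2-1$, and Young's inequality cannot reverse this. Your refinement therefore stops at $\theta/(1+(1-\theta)\beta_+)$. Note also that the printed item (2) with $p'=q'<2$ would beat item (3), which suggests that (1) and (2) were interchanged.

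The repair is to prove the interchanged statements: (1) with exponent $\theta/(1+(1-\theta)\beta)$ for $\beta>q'/2-1$, and (2) with exponent $\theta$. Both follow at once from Theorem~\ref{thm:parabolic} and \eqref{A-est-nondivp}. For (2) this holds at least when $q'<2$; at $q'=2$ the logarithm in $\bigl| |\xi|^{p'/2-1}-1 \bigr|$ forces a positive power. Your pure-power bound even improves (2) to $\theta/(1+(1-\theta)\beta_+)$ for any $\beta_+\in(q'/2-1,0)$ when $q'<2$.
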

In view of Remark \ref{rmk holder} and Theorem \ref{thm:parabolic2}, in the case $p'> 2$, $q'\geq 2$,  we can drop the equi-H\"older continuity of $u_{p, p'}$ with respect to the time variable and only keep it for the space variable.

\subsection{Regularization for generalized $p$-Laplace equations}
Another useful application is to look into the regularization of the generalized $p$-Laplace equations. For a bounded domain $\Omega\subset \R^n$ and $T>0$, let us consider the general equation \eqref{eq:pq-parabolic}, which corresponds to $A_0$ given in \eqref{general p-lap} with $p\geq 1, p'\geq 2$. For $\vep>0$, we consider the regularized equation \eqref{regularized eq}. The associated operator $A_\vep$ is given by 
\begin{equation*}
	A_\vep(\xi)=\left(|\xi|^2+\eps^2\right)^{\frac{p'-2}{2}}\left( I+(p-2) \frac{\xi\otimes \xi}{|\xi|^2+\eps^2}\right). 
\end{equation*}
It is clear that $A_\vep$ is bounded for all $\vep\geq 0$, thanks to the condition $p'\geq 2$. 

Suppose that for $\vep\geq 0$ and given boundary data $g_\vep\in C(\partial_p\Omega_T)$, the solution $u_\vep$ of \eqref{regularized eq} is equi-H\"older continuous in space in the sense of \eqref{equi-holder-x}. We can apply Theorem \ref{thm:parabolic2} to estimate the difference between $u_\vep$ and $u_0$. Note that the condition $p'\geq 2$ enables us to take $k>2$ arbitrarily in (A1). 

For $\vep\geq 0$, let
\[
\tilde{A}_\vep(\xi)=I+(p-2) \frac{\xi\otimes \xi}{|\xi|^2+\eps^2}, \quad \xi\in \R^n_0.
\]
A direct computation yields
\[
\tilde{A}_\vep(\xi)^{\frac{1}{2}}=I+\frac{1}{|\xi|^2} \left(\sqrt{\frac{|\xi|^2(p-1)+\vep^2}{|\xi|^2+\vep^2}}-1\right)\xi\otimes \xi.  
\]
Then, it follows that 
\begin{equation}\label{regularization eq0}
	\begin{aligned}
		&\left\|\tilde{A}_\vep(\xi)^{\frac{1}{2}}-\tilde{A}_0(\xi)^{\frac{1}{2}}\right\|\leq \left|\sqrt{\frac{|\xi|^2(p-1)+\vep^2}{|\xi|^2+\vep^2}}-\sqrt{p-1}\right|\\
		&\leq \vep^2|p-2| \left(|\xi|^2+\vep^2\right)^{-\frac{1}{2}}\left(\left(|\xi|^2(p-1)+\vep^2\right)^{\frac{1}{2}}+\left((|\xi|^2+\vep^2)(p-1)\right)^{\frac{1}{2}}\right)^{-1} \\
		&\leq \vep|p-2|(|\xi|^2+\vep^2)^{-\frac{1}{2}}.
	\end{aligned}    
\end{equation}
Thus estimating the original matrices, we have
\begin{equation}\label{regularization eq1}
	\left\|A_\vep(\xi)^{\frac{1}{2}}-A_0(\xi)^{\frac{1}{2}}\right\|\leq \begin{dcases}
		C_A\vep(|\xi|^2+\vep^2)^{-\frac{1}{2}} & \text{if $p'=2$,}\\
		C_A\vep^{p'-2}+C_A\vep|\xi|^{\frac{p'-2}{2}}(|\xi|^2+\vep^2)^{-\frac{1}{2}} & \text{if $2<p'\leq 4$,}\\
		C_A\vep^2\left(1+|\xi|^{p'-4}\right)+C_A\vep|\xi|^{\frac{p'-2}{2}}(|\xi|^2+\vep^2)^{-\frac{1}{2}} & \text{if $p'>4$}
	\end{dcases}  
\end{equation}
for some $C_A>0$ independent of $\vep>0$ and $\xi\in \R^n_0$. Note that by Young's inequality, we get
\begin{equation}\label{regularization eq2}
	|\xi|^2+\vep^2\geq \left(\frac{1}{m}\right)^m\left(\frac{1}{1-m}\right)^{1-m}|\xi|^{2m} \vep^{2(1-m)} \geq |\xi|^{2m} \vep^{2(1-m)}
\end{equation}
for any $0<m<1$. Based on \eqref{regularization eq1} and \eqref{regularization eq2}, let us discuss the convergence rate of $u_\vep\to u_0$ for different $p'$.

\noindent Case 1: $p'=2$. By \eqref{regularization eq1} and \eqref{regularization eq2}, we are led to 
\[
\left\|A_\vep(\xi)^{\frac{1}{2}}-A_0(\xi)^{\frac{1}{2}}\right\|\leq C_A\vep^{m}(1+|\xi|^{-m}).
\]
To ensure that the assumption (C1) holds in this case, we choose $m<1/2$, so that 
\[
-m>\frac{2-k}{2(k-1)}
\]
for $k>2$ sufficiently large. With this choice, (C1) holds with $\alpha=m$, $\beta=-m$. Applying Theorem \ref{thm:parabolic2}, we obtain the estimate below 
\begin{equation}\label{regularization est2}
	\sup_{\ol{\Omega}\times[0,T)} |u_\vep-u_0|\leq \sup_{\partial_p \Om_T} |g_\vep-g_0|+C\vep^{m\theta}. 
\end{equation}
for any $m\in (0, 1/2)$, under the equi-H\"older regularity \eqref{equi-holder-x}. 

\noindent  Case 2: $2<p'\leq 4$. In this case, \eqref{regularization eq1} and \eqref{regularization eq2} yield 
\[
\begin{aligned}
	\left\|A_\vep(\xi)^{\frac{1}{2}}-A_0(\xi)^{\frac{1}{2}}\right\|
	&\leq C_A \vep^{p'-2}+C_A\vep^{m}(1+|\xi|^{-m-1+\frac{p'}{2}}) \\
	&\leq 2C_A \vep^{\alpha} (1+|\xi|^{-m-1+\frac{p'}{2}})
\end{aligned}
\]
with $\alpha=\min\{p'-2, m\}$, where we need to take $m<(p'-1)/2$ to ensure (C1). 
Hence, Theorem~\ref{thm:parabolic2} implies the convergence rate
\begin{equation}\label{regularization est1}
	\sup_{\ol{\Om}\times[0,T)} |u_\vep-u_0|\leq \sup_{\partial_p \Om_T} |g_\vep-g_0|+C\vep^{\nu}
\end{equation}
for $\nu=\theta\min\{p'-2, m\}$,  
if $u_\vep$ are spatially equi-H\"older continuous solutions with exponent $\theta\in (0, 1]$.

\noindent  Case 3: $p'> 4$. We then use \eqref{regularization eq1} and \eqref{regularization eq2} to obtain 
\[
\begin{aligned}
	\left\|A_\vep(\xi)^{\frac{1}{2}}-A_0(\xi)^{\frac{1}{2}}\right\|
	&\leq C_A\vep^2(1+|\xi|^{p'-4})+C_A\vep^{m}(1+|\xi|^{-m-1+\frac{p'}{2}}) \\
	&\leq 2C_A \vep^{m} (1+|\xi|^{\beta})
\end{aligned}
\]
with 
\[
\beta=\max\left\{p'-4, -m-1+\frac{p'}{2}\right\}>0.
\]
For this $\beta$, it then follows from Theorem \ref{thm:parabolic2}  that $u_\vep\to u_0$ at the rate in \eqref{regularization est1} with 
\[
\nu= \frac{m\theta}{1+(1-\theta)\beta}.
\]
It is not difficult to see that such $\nu$ is nondecreasing with respect to $m$, and thus the estimate is optimized as $m\in (0, 1)$ approaches $1$ and $\beta= p'-4$.

Our results can be summarized as follows. 
\begin{thm}\label{thm:regularization}
	Let $\Omega\subset \R^n$ be a bounded domain and $T>0$. Fix $p\geq 1$, $p'\geq 2$. 
	For $\vep\geq 0$, let $u_{\vep}\in C(\Om_T)$ be a solution of \eqref{regularized eq} with boundary data $u_\vep=g_{\vep}\in C(\partial_p \Om_T)$.  Assume in addition that $u_{\vep}$ are equi-H\"older continuous with respect to the space variable in the sense of \eqref{equi-holder-x} for some $L>0$ and $\theta\in (0, 1]$. Then, there exist $C>0$ and $\nu\in (0, 1)$ such that \eqref{regularization est1} holds.  More precise conditions regarding the exponent $\nu$ are as follows. 
	\begin{enumerate}
		\item If $p'=2$, then $0<\nu <\theta/2$;
		\item If $2<p'< 3$, then  $\nu=(p'-2)\theta$;
		\item If $3\leq p'\leq 4$, then  $0<\nu<\theta$;
		\item If $p'>4$, then 
		\[
		0<\nu< \frac{\theta}{1+(1-\theta)(p'-4)}. 
		\]
	\end{enumerate}
\end{thm}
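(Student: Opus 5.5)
The plan is to deduce Theorem~\ref{thm:regularization} from Theorem~\ref{thm:parabolic2}, applied case by case to the estimates \eqref{regularization eq1}--\eqref{regularization eq2} obtained above. Since $p'\geq 2$, each $A_\vep$ with $\vep\geq0$ is bounded on $\R^n_0$, so the boundedness hypothesis of Theorem~\ref{thm:parabolic2} holds and only spatial equi-H\"older continuity \eqref{equi-holder-x} is needed. Assumption (A1) holds for every $k>2$, because $|\nabla f|^{p'-2}|\nabla^2 f|\sim |x|^{(k-1)(p'-2)+k-2}\to 0$ as $x\to 0$. We are free to take $k$ as large as we like, which makes the constraint $\beta>\frac{2-k}{2(k-1)}$ in (C1) as close as desired to $\beta>-1/2$. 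There is no first-order term, so $H_\vep\equiv0$, (A2) is trivial, and (C2) holds with $c_H=0$. Consequently \eqref{error-est parabolic} reduces to \eqref{regularization est1} with $\nu=\alpha\theta/(1+(1-\theta)\max\{\beta,0\})$.

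The first step is to verify (C1) with explicit $(\alpha,\beta)$. I would obtain \eqref{regularization eq1} by writing $A_\vep^{1/2}=(|\xi|^2+\vep^2)^{(p'-2)/4}\tilde A_\vep^{1/2}$. Then
\[
A_\vep^{1/2}-A_0^{1/2}=\bigl((|\xi|^2+\vep^2)^{\frac{p'-2}{4}}-|\xi|^{\frac{p'-2}{2}}\bigr)\tilde A_\vep^{1/2}+|\xi|^{\frac{p'-2}{2}}\bigl(\tilde A_\vep^{1/2}-\tilde A_0^{1/2}\bigr).
\]
The second term is controlled by \eqref{regularization eq0}. For the first term, note that $\tilde A_\vep^{1/2}$ is uniformly bounded, and let $s=(p'-2)/4$. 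I would use $|(a+b)^s-a^s|\leq b^s$ for $s\leq1$, which gives the bound $\vep^{p'-2}$ when $2<p'\leq 4$. For $p'>4$ I would use the mean value theorem to get $C\vep^2(|\xi|^2+\vep^2)^{s-1}\leq C\vep^2(1+|\xi|^{p'-4})$ for small $\vep$.

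Next I would insert \eqref{regularization eq2} with $m\in(0,1)$ to turn $\vep(|\xi|^2+\vep^2)^{-1/2}$ into $\vep^m|\xi|^{-m}$. This yields the three cases already displayed. For $p'=2$ we get $\alpha=m$ and $\beta=-m$, and the constraint $\beta>-1/2$ forces $m<1/2$; since $\max\{\beta,0\}=0$, this gives $\nu=m\theta$ for any $m<1/2$, i.e. any $\nu<\theta/2$. For $2<p'\leq4$ we get $\beta=p'/2-1-m$, which must exceed $-1/2$, i.e. $m<(p'-1)/2$; as long as $m\leq1$ we also have $\beta\le 0$ when $m\ge p'/2-1$. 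If $p'<3$, choose $m\in[p'-2,\min\{1,(p'-1)/2\})$, which is possible since $p'-2<(p'-1)/2$ exactly when $p'<3$. Then $\alpha=p'-2$, and $\beta=p'/2-1-m\leq 0$ because $m\geq p'-2\geq p'/2-1$. This gives $\nu=(p'-2)\theta$. If $3\leq p'\leq4$, then $\alpha=m$ for every $m<1$, and $\beta\le0$ whenever $m\geq p'/2-1$, which is compatible with $m<1$ since $p'/2-1\le 1$. Letting $m\uparrow1$ gives any $\nu<\theta$. For $p'>4$ we have $\beta=\max\{p'-4,p'/2-1-m\}>0$ and $\alpha=m$. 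Letting $m\uparrow1$, we get $\beta=p'-4$ once $p'/2-1-m\le p'-4$, and then $\nu\uparrow\theta/(1+(1-\theta)(p'-4))$.

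The main obstacle I expect is the bookkeeping in this second step. In each regime one must check that the chosen $m$ simultaneously respects the (C1) lower bound on $\beta$ and keeps $\beta\leq0$, or else controls $\beta$ when it is positive. In particular, the borderline range $2<p'<3$ is what allows the exact exponent $(p'-2)\theta$ instead of an open range, and verifying the interval for $m$ there is the delicate point. The case $p'=2$ with $p=1$ also needs the remark that the bound \eqref{regularization eq0} stays valid, although $\sqrt{p-1}=0$ there. Everything else follows by invoking Theorem~\ref{thm:parabolic2}.
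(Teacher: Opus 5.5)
Your route is the paper's: Theorem~\ref{thm:parabolic2} with $c_H=0$ and $k$ large, followed by a case analysis of (C1). However, your first step contains a genuine error, and items (2) and (3) rest on exactly that step.

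\textbf{The error.} With $a=|\xi|^2$, $b=\vep^2$, $s=(p'-2)/4$, the inequality $(a+b)^s-a^s\le b^s$ gives $\vep^{2s}=\vep^{(p'-2)/2}$, not $\vep^{p'-2}$. No $\xi$-independent bound of order $\vep^{p'-2}$ can hold either. For fixed $\vep>0$ and $\xi\to0$ one has $\tilde A_\vep(\xi)^{1/2}\to I$, hence $A_\vep(\xi)^{1/2}\to\vep^{(p'-2)/2}I$ and $A_0(\xi)^{1/2}\to0$. Meanwhile the right-hand side of \eqref{regularization eq1} tends to $C_A\vep^{p'-2}$, so the display \eqref{regularization eq1} you are aiming at itself fails near $\xi=0$ for $2<p'<6$.

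\textbf{Why it matters.} Suppose you bound the first term only by the correct uniform quantity $C\vep^{(p'-2)/2}$. Then you are forced to take $\alpha\le(p'-2)/2$. This yields at best $\nu=(p'-2)\theta/2$ for $2<p'<3$, and $\nu\le(p'-2)\theta/2<\theta$ for $3\le p'<4$. So items (2) and (3) do not follow.

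\textbf{A related, harmless slip for $4<p'<6$.} Here $s<1$, so the mean value theorem gives $sb\,a^{s-1}$ rather than $sb\,(a+b)^{s-1}$, and your bound $C\vep^2(1+|\xi|^{p'-4})$ fails at $\xi=0$. This does no damage: the first term is then $\le\vep^{(p'-2)/2}\le\vep\le\vep^m$. Item (4) survives with $\beta=p'/2-1-m$, which is $\le p'-4$ once $m\ge 3-p'/2$.

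\textbf{The missing idea.} Trade powers of $\vep$ for negative powers of $|\xi|$ in the first term as well, just as \eqref{regularization eq2} does in the second term. For $0<s\le1$ and $\mu\in[s,1]$ one has $(a+b)^s-a^s\le b^\mu a^{s-\mu}$. To see this, use $b^s$ when $b\ge a$ and $sb\,a^{s-1}$ when $b<a$.

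Taking $\mu=m/2$ bounds the first term by $C\vep^m|\xi|^{p'/2-1-m}$ for $m\in[(p'-2)/2,2]$. This carries exactly the exponent $\beta=p'/2-1-m$ that the second term carries. Hence for $2<p'\le4$, (C1) holds with $\alpha=m$ and $\beta=p'/2-1-m\in(-1/2,0]$, for every $m\in[(p'-2)/2,1]$ with $m<(p'-1)/2$. The value $m=1$ is admissible, since $\vep(|\xi|^2+\vep^2)^{-1/2}\le\vep|\xi|^{-1}$ trivially.

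Choosing $m=p'-2$ for $p'<3$ gives $\nu=(p'-2)\theta$. Letting $m\uparrow1$ gives every $\nu<\theta$ for $3\le p'\le4$. This also repairs your slip at $p'=4$: there $\beta\le0$ requires $m\ge p'/2-1=1$, which your restriction $m<1$ excludes. With this correction the rest of your plan goes through.

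\textbf{One further point.} For $p'>2$ the matrices $A_\vep$ are not bounded on $\R^n_0$, since they grow like $|\xi|^{p'-2}$. What the argument behind Theorem~\ref{thm:parabolic2} really uses is boundedness near $\xi=0$.
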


Our result above, in the case of $p'=2$, is consistent with \cite[Theorem 1]{Mi} for Lipschitz solutions to the level set mean curvature flow equation, which corresponds to our setting with $p'=2$, $p=1$, $\theta=1$ and $g_\vep=g_0$. Despite the different settings of boundary conditions, Theorem \ref{thm:regularization} verifies the same convergence rate $O(\vep^\nu)$ with any $\nu\in (0, 1/2)$, as obtained in \cite{Mi}. Our result additionally provides the exponent bound $\theta/2$ when the solutions $u_\vep$ are merely $\theta$-H\"older continuous in space. 

\subsection{Regularization for biased infinity-Laplace equation}
Let us lastly examine the following nonlinear parabolic equation
\begin{equation}
	\label{eq:biased}
	\partial_tu-\Delta_\infty^N u-a \abs{\nabla u}=f(x,t) \quad \text{in $\Omega_T$,}
\end{equation}
where $a\in \R$ is given and $\Delta_\infty^N$ denotes the $\infty$-Laplacian, defined by 
\[
\Delta_\infty^N u:=\frac{1}{\abs{\nabla u}^2}\tr\left[\left(\nabla u \otimes \nabla u\right) \nabla ^2 u\right].
\]
Such equations involving $\infty$-Laplacian are closely related to the study of stochastic game theory \cite{PSSW}. Viscosity solutions of \eqref{eq:biased} can be approximated by value functions of a family of so-called $a$-biased tug-of-war games, as shown in \cite{Peres2010}. Comparison principle for this equation when $f$ does not change sign is proven in \cite{Liu2019}, where the authors also establish existence and stability results using a regularized approximation that fits into our setting. The regularized equation considered in \cite{Liu2019} reads 
\begin{equation}
	\label{eq:biasedreg}
	\partial_tu-\eps_1\Delta u-\frac{\tr(\left(\nabla u \otimes \nabla u\right)\nabla^2 u)}{\sqrt{\abs{\nabla u}^2+\eps_1^2}}-a\sqrt{\abs{\nabla u}^2+\eps_2^2}=f(x,t),
\end{equation}
where $\vep_1, \vep_2>0$ are small parameters. 

Under our general framework, for any $\vep=(\vep_1, \vep_2)$ we set
\begin{equation*}
	A_0(\xi)=\frac{\xi\otimes \xi}{|\xi|^2}, \quad A_\vep(\xi)=\frac{\xi\otimes \xi}{\abs{\xi}^2+\eps_1^2}+\eps_1 I,
\end{equation*}
for $\xi\in \R^n_0$ and
\begin{equation*}
	H_0(x,t,\xi)=-a\abs{\xi}-f(x,t), \quad H_\vep(x,t,\xi)=-a\sqrt{\abs{\xi}^2+\vep_2^2}-f(x,t),
\end{equation*}
for $(x, t)\times \Omega_T$ and $\xi\in \R^n$. The singularity of $A_0(\xi)$ at $\xi=0$ is mild, and (A1) holds for any $k>0$. The assumption (A2) also clearly holds. Moreover, we have $A_0(\xi)^{1/2}=A_0(\xi)$ for all $\xi\in \R^n_0$. 
For $\vep_1>0$, by computations we obtain 
\begin{equation*}
	A_\vep(\xi)=\left(\eps_1 +\frac{\abs{\xi}^2}{\abs{\xi}^2+\eps_1^2}\right)\frac{\xi\otimes \xi}{|\xi|^2}+\eps_1 \left(I-\frac{\xi\otimes \xi}{|\xi|^2}\right),
\end{equation*}
\begin{equation*}
	A_\eps(\xi)^{1/2}=\sqrt{\eps_1 +\frac{\abs{\xi}^2}{|\xi|^2+\eps_1^2}}\frac{\xi\otimes \xi}{|\xi|^2}+\sqrt{\eps_1}\left(I-\frac{\xi\otimes \xi}{|\xi|^2}\right).
\end{equation*}
We thus can estimate
\[
\begin{aligned}
	\norm{A_\eps(\xi)^{1/2}-A_0(\xi)^{1/2}}^2&= \norm{\left(\sqrt{\eps_1 +\frac{\abs{\xi}^2}{|\xi|^2+\eps_1^2}}-1\right)\frac{\xi\otimes \xi}{|\xi|^2} +\sqrt{\eps_1}\left(I-\frac{\xi\otimes \xi}{|\xi|^2}\right)}^2\\
	&= \left(\sqrt{\eps_1 +\frac{\abs{\xi}^2}{|\xi|^2+\eps_1^2}}-1\right)^2+\eps_1\leq \left(\eps_1-\frac{\eps_1^2}{|\xi|^2+\eps_1^2}\right)^2+\eps_1.
\end{aligned}
\]
By Young's inequality,
\begin{equation*}
	\frac{\eps_1^2}{|\xi|^2+\eps_1^2} \leq \frac{\eps_1^2}{\abs{\xi}^{2m}\eps_1^{2-2m}}\leq \eps_1^{2m}|\xi|^{-2m}
\end{equation*}
which combined with above implies
\[
\norm{A_\eps(\xi)^{1/2}-A_0(\xi)^{1/2}}^2\leq 2\eps_1+ \eps_1^{4m}|\xi|^{-4m}
\]
for any $m\in (0, 1)$ and $\eps_1>0$ small. By restricting the choice of $m$ to $(0, 1/4)$,  we obtain
\[
\norm{A_\eps(\xi)^{1/2}-A_0(\xi)^{1/2}}\leq C\eps_1^{2m}(1+|\xi|^{-2m})
\]
for some $C>0$ independent of $\eps_1>0$. Hence, the assumption (C1) holds with $\alpha=2m$ and $\beta=-2m$ for any $m\in (0, 1/4)$. 

As for (C2), we have
\begin{equation*}
	\abs{H_\vep(x, t, \xi)-H_0(x, t, \xi)}=\abs{a\abs{\xi} -a\sqrt{\abs{\xi}^2+\vep_2^2}}\leq \abs{a}\vep_2,
\end{equation*}
so it holds with $\gamma=1$. We can now use Theorem \ref{thm:parabolic} to obtain the following result. 
\begin{thm}
	Let $\Omega\subset \R^n$ be a bounded domain and $T>0$. For $\eps=(\vep_1, \vep_2)$ with $\vep_1, \vep_2\geq0$, let $u_{\eps}\in C(\Om_T)$ be a solution to \eqref{eq:biasedreg} with boundary value $u_\vep=g_{\eps}\in C(\partial_p \Om_T)$. Let $u_0\in C(\Om_T)$ be a solution to \eqref{eq:biased} satisfying $u_0=g_0\in C(\partial_p \Om_T)$.  Suppose that $u_{\eps}$ are equi-H\"older continuous in $\overline{\Om_T}$ with exponent $\theta\in (0, 1]$ for all $\vep_1, \vep_2$ small. Then, there exists $C>0$ such that
	\begin{equation*}
		\sup_{\ol{\Om}\times[0,T)}|u_{\eps}-u_0|\leq \sup_{\partial_p\Omega_T}|g_{\eps}-g_0| +C\eps_1^{\alpha\theta}+C\vep_2
	\end{equation*}
	for any $\alpha\in (0, 1/2)$.
\end{thm}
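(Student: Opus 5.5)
This is a direct application of Theorem~\ref{thm:parabolic}, with the parameters produced by the computations just preceding the statement. The idea is to treat $\vep=(\vep_1,\vep_2)$ through two separate scalar parameters. We set $A_\vep$, $H_\vep$ as above, so that \eqref{eq:biasedreg} is exactly \eqref{general parabolic approx eq}, and \eqref{eq:biased} corresponds to $\vep=0$.

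First we confirm the structural assumptions. For (A1), note that $\|A_0(\xi)\|\le 1$ and $\|A_\vep(\xi)\|\le 1+\vep_1$ are bounded. For $f(x)=|x|^k$ with $k>2$, we have $\|\nabla^2 f(x)\|\le k(k-1)|x|^{k-2}\to 0$, so \eqref{singular-cancel} holds uniformly in $\vep$. For (A2), the $x,t$-dependence of $H_\vep$ enters only through $f$. We would therefore assume, as is implicit in the statement, that $f$ is Lipschitz in $(x,t)$; this gives (A2) with $L_H=\mathrm{Lip}(f)$, independent of $\vep$.

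Next come (C1) and (C2). The preceding computation gives
\[
\norm{A_\vep(\xi)^{1/2}-A_0(\xi)^{1/2}}\le C\vep_1^{2m}(1+|\xi|^{-2m}), \qquad m\in(0,1/4).
\]
Take $\alpha=2m\in(0,1/2)$ and $\beta=-2m$. Condition (C1) requires $\beta>\frac{2-k}{2(k-1)}$. Since $\frac{2-k}{2(k-1)}\to -\tfrac12$ as $k\to\infty$ and $-2m>-\tfrac12$, this holds once $k$ is chosen large depending on $m$, which (A1) allows. Condition (C2) holds with $c_H=|a|$ and $\gamma=1$, with $\vep_2$ in place of $\vep$.

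Since $A_\vep$ and $H_\vep$ depend on different parameters, we run the proof of Theorem~\ref{thm:parabolic} with $c_A\vep^\alpha$ replaced by $C\vep_1^{\alpha}$ and $c_H\vep^\gamma$ replaced by $|a|\vep_2$. This is the generalization allowed by the remark on general moduli $\omega_A(\vep)=C\vep_1^{2m}$ and $\omega_H(\vep)=|a|\vep_2$. Step~4 then produces $\vep_1^{2\alpha}$ terms and Step~3 produces $|a|\vep_2$. The optimization over $\delta$ involves only $\vep_1$, giving $C\vep_1^{\nu}$ with $\nu=\alpha\theta/(1+(1-\theta)\max\{\beta,0\})=\alpha\theta$, because $\beta<0$. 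The remaining term is $|a|T\vep_2$, which is absorbed into $C\vep_2$.

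Two degenerate cases need separate attention. If $\vep_1=0$ but $\vep_2>0$, then $A_\vep=A_0$ and the second-order contribution vanishes identically. If $\vep_1>0$, the optimal $\delta_0\sim\vep_1^{2\alpha/(a-d)}$ lies in $(0,1)$ for $\vep_1$ small. Both cases fit the same argument.

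The main point of care is this two-parameter bookkeeping, that is, checking that the choice of $\delta$ depends only on $\vep_1$ so that the two errors decouple. We also need to check that $k$ is fixed from $\alpha$ before the argument begins, since the constant $C$ depends on $k$ and therefore on $\alpha$. This is consistent with the statement "for any $\alpha\in(0,1/2)$ there exists $C$".
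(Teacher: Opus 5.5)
Your proposal is correct and follows the paper's route. You verify (A1)--(A2), obtain (C1) with $\alpha=2m$, $\beta=-2m<0$ for $m\in(0,1/4)$ (taking $k$ large so that $\beta>\frac{2-k}{2(k-1)}$) and (C2) with $c_H=|a|$, $\gamma=1$, and then invoke Theorem~\ref{thm:parabolic}, where $\beta<0$ gives $\nu=\alpha\theta$. Two points you make explicit are left implicit in the paper: $\vep_1$ and $\vep_2$ decouple in the estimate (you need this to get the separate terms $C\vep_1^{\alpha\theta}$ and $C\vep_2$), and (A2) requires $f$ to be Lipschitz in $(x,t)$.

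One caveat, shared with the paper: matching \eqref{eq:biasedreg} to $A_\vep(\xi)=\frac{\xi\otimes\xi}{|\xi|^2+\vep_1^2}+\vep_1 I$ requires the denominator $|\nabla u|^2+\vep_1^2$, not the square root printed there. Otherwise the $\vep_1\to 0$ limit would not be \eqref{eq:biased}.
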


	\bibliographystyle{abbrv}

\end{document}